\setlist[enumerate]{itemsep=0pt,label=$(\mathrm{\roman*})$, topsep=5pt}
\setlist[description]{itemsep=0pt}
\setlist[itemize]{itemsep=0pt, topsep=5pt, 
labelindent=\parindent,leftmargin=*}
\newtheorem{thm}{Theorem}[section]
\newtheorem{lem}[thm]{Lemma}
\theoremstyle{definition}
\newtheorem{definition}[thm]{Definition}
\newtheorem{claim}{Claim}
\newcommand\enclosebox[2]{%
  \BeforeBeginEnvironment{#1}{\begin{#2}}%
  \AfterEndEnvironment{#1}{\end{#2}}%
}
\newcommand{\ab}{\mathrm{ab}}
\newcommand{\Char}{\operatorname{char}}
\newcommand{\Cu}{\operatorname{Cu}}
\newcommand{\CuX}{\Cu(X)}
\newcommand{\CuXp}{\Cu(X')}
\newcommand{\Cbar}{\overline{C}}
\newcommand{\Cinf}{C_{\infty}}
\newcommand{\Cpbar}{\overline{C'}}
\newcommand{\Cpinf}{C'_{\infty}}
\newcommand{\Ct}{C^{\mathrm{t}} }
\newcommand{\CtX}{\Ct(X)}
\newcommand{\CtXg}{\CtX^{\geo}}
\newcommand{\CXg}{\CX^{\geo}}
\newcommand{\CXp}{C(X')}
\newcommand{\CX}{C(X)}
\newcommand{\Cf}{\textit{cf.}\;}
\newcommand{\Coker}{\operatorname{Coker}}
\renewcommand{\d}{\partial}
\newcommand{\DM}{\operatorname{DM}}
\newcommand{\F}{\mathcal{F}}
\newcommand{\Fx}{F(x)}
\newcommand{\Fxp}{F(x')}
\newcommand{\Fxt}{\Fx^{\times}}
\newcommand{\Fxpt}{\Fxp^{\times}}
\newcommand{\FC}{F(C)}
\newcommand{\FCp}{F(C')}
\newcommand{\FCx}{\FC_x}
\newcommand{\FCpxp}{\FCp_{x'}}
\newcommand{\Fbar}{\ol{F}}
\newcommand{\geo}{\mathrm{geo}}
\newcommand{\Gal}{\operatorname{Gal}}
\newcommand{\Gm}{\mathbb{G}_{m}}
\newcommand{\hNis}{h_0^{\mathrm{Nis}}}
\newcommand{\Hom}{\operatorname{Hom}}
\newcommand{\isomto}{\stackrel{\simeq}{\to}}
\renewcommand{\Im}{\operatorname{Im}}
\newcommand{\inj}{\hookrightarrow}
\newcommand{\IC}{I(C)}
\newcommand{\ICp}{I(C')}
\newcommand{\ICpbar}{I(\Cpbar)}
\newcommand{\IX}{I(X)}
\newcommand{\IXp}{I(X')}
\newcommand{\Jac}{\operatorname{Jac}}
\newcommand{\Ker}{\operatorname{Ker}}
\newcommand{\kC}{k(C)}
\newcommand{\kCx}{k(C)_x}
\newcommand{\kx}{k(x)}
\newcommand{\kxt}{\kx^{\times}}
\newcommand{\kX}{k(X)}
\renewcommand{\L}{\mathbb{L}}
\newcommand{\m}{\mathfrak{m}}
\newcommand{\mK}{\m_K}
\newcommand{\N}{\mathbb{N}}
\newcommand{\Np}{\N'}
\newcommand{\ol}[1]{\overline{#1}}
\newcommand{\OK}{O_K}
\newcommand{\OKt}{\OK^{\times}}
\newcommand{\opcit}{\textit{op.\,cit.}}
\newcommand{\onto}[1]{\stackrel{#1}{\to}}
\newcommand{\otimesZ}{\otimes_{\Z}}
\newcommand{\otimesM}{\stackrel{M}{\otimes}}
\newcommand{\PF}{$(\mathbf{PF})\,$}
\newcommand{\phibar}{\overline{\phi}}
\newcommand{\piab}{\pi_1^{\ab}}
\newcommand{\pitab}{\pi_1^{\mathrm{t},\ab}}
\newcommand{\piabC}{\piab(C)}
\newcommand{\piabX}{\piab(X)}
\newcommand{\piabXg}{\piab(X)^{\geo}}
\newcommand{\pitabX}{\pitab(X)}
\newcommand{\pitabXg}{\pitab(X)^{\geo}}
\newcommand{\plim}{\varprojlim}
\newcommand{\Res}{\operatorname{Res}}
\newcommand{\Red}{$(\mathbf{Red})\,$}
\newcommand{\rhoX}{\rho_X}
\DeclareMathOperator*{\restprod}%
 {\mathchoice{\ooalign{\ensuremath{\displaystyle\prod}\crcr\ensuremath{\displaystyle\coprod}}}%
             {\ooalign{\ensuremath{\textstyle\prod}\crcr\ensuremath{\textstyle\coprod}}}%
             {\ooalign{\ensuremath{\scriptstyle\prod}\crcr\ensuremath{\scriptstyle\coprod}}}%
             {\ooalign{\ensuremath{\scriptscriptstyle\prod}\crcr\ensuremath{\scriptscriptstyle\coprod}}}%
 }
\newcommand{\sX}{\mathscr{X}}
\newcommand{\surj}{\twoheadrightarrow}
\newcommand{\ssm}{\smallsetminus}
\newcommand{\Spec}{\operatorname{Spec}}
\newcommand{\SK}{\operatorname{SK}}
\newcommand{\SKX}{\SK_1(X)}
\newcommand{\Split}{$(\mathbf{Split})\,$}
\newcommand{\vp}{\varphi}
\newcommand{\vpbar}{\overline{\varphi}}
\newcommand{\vK}{v_K}
\newcommand{\wt}[1]{\widetilde{#1}}
\newcommand{\wh}[1]{\widehat{#1}}
\newcommand{\X}{\wt{X}}
\newcommand{\Xbar}{\ol{X}}
\newcommand{\Z}{\mathbb{Z}}
\newcommand{\Zhat}{\wh{\Z}}
\newcommand{\sn}{\smallskip\noindent}
\title{Class field theory for products of open curves over a local field}
\author{Toshiro Hiranouchi}
\begin{document}
\pagenumbering{arabic}
\maketitle

\begin{abstract}
We investigate the class field theory for products of open curves over a local field. 
In particular, we determine the kernel of the reciprocity homomorphism.
\end{abstract}

\section{Introduction}
\label{Introduction}

In \cite{Hir10} and \cite{Hir17}, we study 
the class field theory 
for open (=non proper) curves over a local field. 
Here, a \textbf{local field} means a complete discrete valuation field 
with finite residue field. 
The aim of this note is to 
extend the above results to products of those open curves (over a local field). 
%

To state our results precisely, 
we use the following notation: 
\begin{itemize}
	\item 
	$k$\,: a local field with $\Char(k) = p\ge 0$, 
	\item 
	$\Xbar_1,\ldots, \Xbar_n$\,: proper and smooth curves over $k$, 
	\item $X_i\subset \Xbar_i$\,: a nonempty open subscheme in $\Xbar_i$ 
	with $X_i(k)\neq \emptyset$ for each $1 \le i \le n$, and 
	\item $X = X_1\times \cdots \times X_n \subset \Xbar = \Xbar_1 \times \cdots \times \Xbar_n$. 
\end{itemize}
First, we introduce an abelian group  
$\CX$  which is called the \textbf{id\`ele class group} 
for $X$  
as in \cite{Hir10} (Def.~\ref{def:CX}), 
and the \textbf{reciprocity map} 
\[
\rhoX:\CX \to \piabX
\] 
is defined using the $2$-dimensional local class field theory(Def.~\ref{def:rho}). 
Next, we determine the prime to $p$-part of the kernel $\Ker(\rhoX)$ 
of $\rhoX$ under some assumptions. 

\begin{thm}[Thm.~\ref{thm:main}]
\label{thm:main_intro}
Let $X$ and $\Xbar$  
be as above. 
Assume the following conditions 
\Red or \Split for each $\Xbar_i$: 
\begin{description}
	\item[\Red] the Jacobian variety $\Jac(\Xbar_i)$ of $\Xbar_i$ has potentially good 
reduction,\  
	\item[\Split] the special fiber of the connected component of the N\'eron model 
of $\Jac(\Xbar_i)$ is an extension of an abelian variety 
by a split torus. 
\end{description}

\noindent
Then, the kernel $\Ker(\rhoX)$ is the maximal $l$-divisible subgroup of $\CX$ 
for all prime number $l\neq p$.  
\end{thm}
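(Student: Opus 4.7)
The plan is to reduce the theorem to the one-dimensional case established in \cite{Hir10} and \cite{Hir17}. Fix a prime $l\neq p$. Since $\piabX$ is profinite, its only $l$-divisible subgroup is zero, so any $l$-divisible subgroup of $\CX$ lies in $\Ker(\rhoX)$; the nontrivial direction is to show that $\Ker(\rhoX)$ is itself $l$-divisible. Equivalently, it suffices to prove that the mod-$l^n$ reciprocity map
\[
\rho_{X,l^n}\colon \CX/l^n \longrightarrow \piabX/l^n
\]
is injective for every $n\ge 1$, from which the $l$-divisibility of $\Ker(\rhoX)$ follows by a standard diagram chase.

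To attack injectivity, I would pass to Pontryagin duals: $\rho_{X,l^n}$ is injective iff the dual map $\Hom(\piabX/l^n,\Z/l^n)\to \Hom(\CX/l^n,\Z/l^n)$ is surjective. The left-hand side is $H^1_{\et}(X,\Z/l^n)$, and the right-hand side, by the construction of $\CX$ through Milnor K-theory at residue fields of points together with the Merkurjev--Suslin theorem (applicable since $l\neq p$), is computed by \'etale cohomology of $X$ with coefficients in Tate twists $\mu_{l^n}^{\otimes j}$. In this cohomological formulation, the reciprocity map becomes a pairing that can be analyzed factor by factor.

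The crux of the reduction is the product decomposition. Because $l\neq p$, the K\"unneth formula splits the total \'etale cohomology of $X$ as a tensor product over the $X_i$, and $\CX/l^n$ should admit a matching filtration whose graded pieces correspond to the K\"unneth components via the residue symbols of $2$-dimensional local class field theory. The ``single-factor'' pieces are controlled by each $\rho_{X_i,l^n}$, which is injective under \Red or \Split on $\Xbar_i$ by \cite{Hir10}, \cite{Hir17}; the ``mixed'' pieces should reduce to tensor products of these one-dimensional statements.

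The main obstacle will be setting up and verifying this K\"unneth-compatible filtration on $\CX/l^n$ and checking that the reciprocity map respects it. This is precisely where the hypotheses \Red and \Split are expected to enter, providing the explicit structure of the $l$-adic Tate module of $\Jac(\Xbar_i)$ and of the special fibre of its N\'eron model required to identify $H^1_{\et}(\Xbar_i,\Z/l^n)$ and propagate the one-dimensional injectivity through the product.
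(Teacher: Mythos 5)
Your reduction of the theorem to the injectivity of $\CX/l^n\to\piabX/l^n$ is indeed the paper's pivot (Lem.~\ref{lem:key}), but the route you propose for that injectivity is where the genuine gap lies. You defer everything to ``setting up a K\"unneth-compatible filtration'' after dualizing and identifying $(\CX/l^n)^{\vee}$ with \'etale cohomology; neither step is carried out, and the identification is essentially the theorem itself: for an open product of curves over a local field, $\CX$ is a motivic-homology-type group (Wiesend/Somekawa), not something computed by $H^*_{\et}(X,\mu_{l^n}^{\otimes j})$. Worse, the expected shape of the argument is off. Because each $X_i$ has a $k$-rational point, $\piabX$ (equivalently $H^1_{\et}(X,\Z/l^n)$) has no mixed K\"unneth components at all --- it splits as a sum over the factors --- so the mixed pieces of the class group cannot be ``matched with tensor products of one-dimensional statements''; they must be shown to \emph{vanish} mod $l^n$. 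That vanishing is the heart of the paper's proof: after killing the wild part (Lem.~\ref{lem:div}) and passing to the perfect closure (Lem.~\ref{lem:perf}), $\CtX$ decomposes into Somekawa $K$-groups $K(k;\Jac(X_{i_1}),\ldots,\Jac(X_{i_r}),\Gm)$ (Lem.~\ref{lem:dec}), and the hypotheses \Red/\Split enter through Saito's rank invariance $r(\Xbar_i\otimes_k k')=r(\Xbar_i)$, which yields norm surjectivity on $\Ct(X_i)^{\geo}/l^m$ and then, via a projection-formula argument, $K(k;\Jac(X_{i_1}),\ldots,\Jac(X_{i_r}),\Gm)/l^m=0$ for $r\ge 2$ (Lem.~\ref{lem:dec2}); this reduces the injectivity to the curve case of Thm.~\ref{thm:curve}. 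Your proposal contains no substitute for this vanishing argument, and the role you assign to \Red/\Split (``explicit structure of the Tate module'') is not how they are actually used.

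Two further problems. First, your opening claim that a profinite group has no nontrivial $l$-divisible subgroup is false for a single prime $l$ (e.g. $\prod_{q\neq l}\Z_q\subset\Zhat$ is $l$-divisible), so the ``easy direction'' needs a different justification than the one you give. Second, the concluding ``standard diagram chase'' is not standard: injectivity of $\CX/l^n\to\piabX/l^n$ for all $n$ only places $\Ker(\rhoX)$ inside $\bigcap_n l^n\CX$, which for an arbitrary abelian group need not be $l$-divisible. The paper has to invoke Lem.~7.7 of \cite{JS03} to see that the kernel of $\CX\to\CX_{\L}$ is $l$-divisible, and the topological finite generation of $\pitabX$ together with the pro-$p$-ness of the wild inertia to see that the kernel of $\piabX\to\piabX_{\L}$ is $l$-torsion free; only then does the diagram \eqref{eq:Kerdiv} give $l$-divisibility of $\Ker(\rhoX)$. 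These ingredients, as well as the positive-characteristic issues ($U^1K_2$, perfect closure) that your cohomological framework would also have to confront, are absent from the proposal.
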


The conditions in Thm.~\ref{thm:main_intro} 
are the same one in \cite{Yam09}, Thm.~1.1. 
In \opcit, for the case where $\Xbar_i = X_i$ 
the kernel of $\rho_X$ is determined as in Thm.~\ref{thm:main_intro}
as in the unramified class field theory.
Our main contribution is 
to show that the proof of \opcit\ can also be applied to open curvesover 
the local field has positive characteristic.

\subsection*{Notation}

%
In this note,  
we basically follow the notation and the definitions 
in \cite{Hir17}. 
We fix the following notation:  
A \textbf{variety} over a field $F$ means 
a separated and connected scheme of finite type over $\Spec(F)$.
For a variety $X$, a closed integral subscheme of $X$ of dimension one 
is called a \textbf{curve in $X$}. 

Following \cite{Hir17}, Def.~3.1, 
we call a pair $X \subset \Xbar$ of 
\begin{itemize}[label=$\circ$]
\item $\Xbar$\,:  a smooth, proper and connected curve over a field $F$,  and 
\item $X$\,: a nonempty open subscheme of $\Xbar$
\end{itemize}
an \textbf{open curve} over $F$.

For a field $F$, 
\begin{itemize}
\item $\Char(F)$\,: the characteristic of $F$, 
\item $\Fbar$\,: a separable closure of $F$,  
\item $G_F:=\Gal(\Fbar/F)$\,: the Galois group of the extension $\Fbar/F$, 
\item $F^{\ab}$\,: the maximal abelian extension of $F$ in $\Fbar$, 
\item $G_F^{\ab}:= \Gal(F^{\ab}/F)$\,: the Galois group of $F^{\ab}/F$, 
and 
\item $K_2(F)$\,: the Milnor $K$-group of $F$ (\Cf \cite{Mil70}; \cite{FV}, Chap.~IX). 
\end{itemize}

\noindent
For an extension $E/F$ of fields, the embedding $F^{\times} \inj E^{\times}$ 
induces a homomorphism 
\begin{equation}
	\label{eq:j}
j_{E/F}:K_2(F) \to K_2(E).
\end{equation}
If the extension $E/F$ is finite, then we also have the norm map
\begin{equation}
\label{eq:Norm}
N_{E/F}:K_2(E) \to K_2(F).
\end{equation}

\noindent 
For  a complete discrete valuation field $K$, 
we define 
\begin{itemize}
\item $\vK:K^{\times} \to \Z$ :\,the valuation of $K$, 
\item $\OK:=\set{f \in K | \vK(f)\ge 0}$ :\, the valuation ring of $K$,
\item $\mK:=\set{f \in K | \vK(f) >0}$ :\, the maximal ideal of $\OK$,  
\item $k_K:= \OK/\mK $ :\, the residue field of $K$,  
\item $U_K := \OKt$ :\, the group of units in $\OK$, 
\item $U_K^n := 1 + \mK^n$ :\, the higher unit groups, and
\item $\d_K : K_2(K) \to k_K^{\times}$ :\, the \textbf{tame symbol map} 
defined by 
\begin{equation}
\label{eq:tame_symbol}
 \d_K(\set{f,g}) := (-1)^{v_K(f)v_K(g)}f^{v_K(g)}g^{-v_K(f)} \bmod \m_K, 
\end{equation} 
for $\set{f,g}\in K_2(K)$, 
and 
\item $U^1K_2(K)$: the subgroup of $K_2(K)$ generated by 
the image of $U_K^1 \times K^{\times}$ by the 
symbol map 
$K^{\times} \times K^{\times} \to K_2(K); (x,y) \mapsto \set{x,y}$. 
\end{itemize}

%

\subsection*{Acknowledgments}
This work was supported by KAKENHI 	17K05174.

\section{Id\`ele class groups}
\label{sec:class groups}

In this section, we use the following notation: 

\begin{itemize}
	\item $X$\,: a variety over a field $F$, 
	\item $X_0$\,: the set of closed points in $X$,   
	\item $\CuX$\,: the set of the normalizations $\phi: C \to X$ 
	of a curve in $X$ (\Cf Notation). 
For simplicity, we often refer to the domain $C$ of the normalization 
$\phi:C\to X$ in $\CuX$ 
as an element of $\CuX$ and write $C\in \CuX$.  
	\item $\Fx$\,: the residue field at $x \in X_0$.
\end{itemize}
For each $\phi:C\to X \in \CuX$, 
\begin{itemize}
	\item $\Cbar$\,: the regular compactification of $C$ 
	which is the smooth and proper curve over $F$ containing $C$ as a dense open subscheme 
(by a resolution of singularities), 
	\item $\phibar:\Cbar \to \Xbar$\,: the canonical extension of $\phi$,  
	\item $\Cinf := \Cbar \ssm C$, and
	\item $\FCx= \operatorname{Frac}(\mathscr{O}_{C,x}^{\wedge})$\,: the completion of the function field $\FC$ of $C$ at $x \in \Cbar_0$. 
\end{itemize}

\subsection*{Id\`ele group}

\begin{definition}
\label{def:IX}
The \textbf{id\`ele group} of $X$ is defined by  
\[
\IX = \bigoplus_{x\in X_0} \Fx^{\times} \oplus 
  \bigoplus_{C \in \CuX}\bigoplus_{x\in \Cinf}K_2(\FCx). 
\]
\end{definition}


\begin{lem}
\label{lem:I}
	Let\/ $\vp: X'\to X$ be a morphism of varieties over $F$. 
%
Then, there is a canonical homomorphism 
		$\vp_{\ast}: I(X') \to I(X)$. 
\end{lem}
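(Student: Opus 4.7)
The plan is to define $\vp_*$ componentwise along the direct sum decomposition of $\IXp$, sending each summand into a single summand of $\IX$; this preserves finite support automatically, so the componentwise homomorphisms assemble into a well-defined group homomorphism. For a summand $\Fxpt$ with $x' \in X'_0$, I would first observe that $\Fxp$ is a finite extension of $F$, hence so is its subfield $F(\vp(x'))$, so $\vp(x')$ is itself a closed point of $X$. I then define $\vp_*$ on $\Fxpt$ to be the norm $N_{\Fxp/F(\vp(x'))}$ into the $F(\vp(x'))^{\times}$-summand of $\IX$.

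Next, for a summand $K_2(\FCpxp)$ indexed by $\phi': C' \to X'$ in $\CuXp$ and $x' \in \Cpinf$, I analyze $\psi := \vp \circ \phi' : C' \to X$. If $\psi$ is dominant, it factors through the normalization $\phi: C \to X$ of its image (by the universal property, since $C'$ is normal), and extends to a finite morphism $\overline{\psi}: \Cpbar \to \Cbar$ of smooth proper curves. Setting $x := \overline{\psi}(x')$, the completion $\FCpxp$ is a finite extension of $\FCx$. If $x \in \Cinf$, I define $\vp_*$ on this summand as the norm $N: K_2(\FCpxp) \to K_2(\FCx)$ into the $(C,x)$-summand. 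If instead $x \in C$, then $\phi(x) \in X_0$, and I define $\vp_*$ as the composition
\[
K_2(\FCpxp) \xrightarrow{N} K_2(\FCx) \xrightarrow{\d} \Fx^{\times} \xrightarrow{N} F(\phi(x))^{\times},
\]
landing in the $X_0$-summand indexed by $\phi(x)$. If $\psi$ is constant with image $y \in X_0$, then $\Fy \inj \Fxp$ is a finite extension (both being finite over $F$), and I define $\vp_*$ as
\[
K_2(\FCpxp) \xrightarrow{\d} \Fxpt \xrightarrow{N} \Fyt,
\]
landing in the $\Fyt$-summand of $\IX$.

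The only point requiring care, and hardly a serious obstacle, is the case analysis ensuring that the target summand is canonically determined by the data $(\phi', x')$ and that all field extensions used for the norm maps are indeed finite. The former is a matter of checking the dichotomy "dominant vs.\ constant" for $\psi$, together with the alternative $x \in \Cinf$ vs.\ $x \in C$ in the dominant case; the latter follows either from the finiteness of $\overline{\psi}$ (in the dominant case) or from the finiteness of closed-point residue fields over $F$ (in the constant case). Because each summand of $\IXp$ maps into a single summand of $\IX$, there are no convergence issues on direct sums, and the resulting $\vp_* : \IXp \to \IX$ is manifestly canonical.
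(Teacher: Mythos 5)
Your construction is correct and is essentially the paper's own proof: the same componentwise definition on the direct-sum decomposition, with the same case analysis (the image of the curve collapses to a closed point vs.\ is a curve in $X$, and in the latter case the image point lies in $C_0$ vs.\ in $C_{\infty}$), using residue-field norms, $K_2$-norms and tame symbols. The only cosmetic differences are that in the mixed case you apply the $K_2$-norm $K_2(F(C')_{x'})\to K_2(F(C)_x)$ before the tame symbol (and then norm down to $F(\phi(x))^{\times}$), whereas the paper applies the tame symbol on $K_2(F(C')_{x'})$ first and then the norm of residue fields --- the two composites agree by the standard compatibility of norms with tame symbols --- and your word ``dominant'' for $\vp\circ\phi'$ should be read as ``non-constant, hence dominant onto the closure of its image.''
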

\begin{proof}
We define the homomorphism 
\[
\vp_{\ast}:\IXp\to \IX; (\xi_{x'})_{x'} \mapsto (\vp_{\ast}^{x'\to x}(\xi_{x'})_x)
\] 
by defining $\vp_{\ast}^{x'\to x}$ for each component as follows:

\begin{itemize}
\item For $x' \in X'_0$ and $x = \vp(x') \in X_0$, 
the norm homomorphism of the field extension 
$\Fxp/\Fx$ gives 
\[
\vp_{\ast}^{x'\to x}:= N_{\Fxp/\Fx}:\Fxpt\to \Fxt.
\]

\item For $C' \in\CuXp$, and $x' \in \Cpinf$, 
when $C'$ is the normalization of a curve $Z'$ in $X'$, 
we denote by 
$Z = \overline{\vp(Z')} \subset X$ the scheme theoretic closure of $\vp(Z')$, and 
$C \to Z$ is the normalization of $Z$. 
The morphism $\vp$ induces  
$\vpbar_C:\Cpbar \to \Cbar$.

\begin{itemize}
\item If $x := Z$ is a closed point of $X$, then we have 
\[
\xymatrix{
	\vp_{\ast}^{x'\to x}:K_2(\FCpxp) \ar[r]^-{\partial_{F(C')_{x'}}} & \Fxpt \ar[r]^{N_{\Fxp/\Fx}} &\Fxt,
	}
\]
where $\partial_{F(C')_{x'}}$ is the tame symbol map (defined in \eqref{eq:tame_symbol}). 

\item If $Z$ is  a curve in $X$ (and thus $C\in\CuX$), 
and $x := \vpbar_C(x')$ is in $C_0$, then  
we use 
\[
\xymatrix{
	\vp_{\ast}^{x'\to x}: K_2(\FCpxp) \ar[r]^-{\partial_{F(C')_{x'}}} & \Fxpt \ar[r]^{N_{\Fxp/\Fx}} & \Fxt.
	}
\]
\item 
If $Z$ is a curve in $X$ (and hence $C \in \CuX$) and 
$x := \vpbar_C(x')$ is in $\Cinf$, 
then $\vp$ induces the (finite) morphism $\vp_C:C' \to C$ of curves 
and this gives the norm map 
\[
\xymatrix{
	\vp_{\ast}^{x'\to x}:=N_{\FCpxp/\FCx}: K_2(\FCpxp) \ar[r] & K_2(\FCx)
	}
\]
of the Milnor $K$-groups (\Cf \eqref{eq:Norm}). 
\end{itemize}
\end{itemize}
These homomorphisms define	$\vp_{\ast} :\IXp \to \IX$.

%
%
%
%
\end{proof}

\subsection*{Id\`ele class group}
For each $\phi:C\to X \in \CuX$, 
we define a homomorphism 
\[\d_C:K_2(\FC) \to I(C) = \bigoplus_{x\in C_0} \Fxt \oplus \bigoplus_{x\in \Cinf}K_2(\FCx)
; \xi \mapsto (\d_{C,x}(\xi))_x
\]
 as follows:
%
%
\begin{itemize}
\item 
For $x \in C_0$, 
the inclusion $\FC \inj \FCx$ 
induces 
$j_{\FCx/\FC}:K_2(\FC) \to K_2(\FCx)$ (\Cf \eqref{eq:j}). 
We have the composite  
\[
\xymatrix{
\partial_{C,x}:K_2(\FC) \ar[r]^-{j_{\FCx/\FC}} & K_2(\FCx) \ar[r]^-{\partial_{\FCx}} & \Fxt,
} 
\]
where $\partial_{\FCx}$ is the tame symbol map \eqref{eq:tame_symbol}.

\item For $x \in C_{\infty}$, 
the inclusion $\FC \inj \FCx$ 
induces 
\[
\xymatrix{
\partial_{C,x}:= j_{\FCx/\FC}: K_2(\FC) \ar[r] & K_2(\FCx).
}
\] 
\end{itemize}

\sn
These homomorphisms give the required 
$\d_C:K_2(\FC)\to \IC$. 
Composing with the sum of 
$\phi_{\ast}:\IC \to \IX$ (Lem.~\ref{lem:I}, (i)), 
we obtain a homomorphism  
\[
\vcenter{
\xymatrix{
  \displaystyle \d:\bigoplus_{C\in\CuX} K_2(\FC) \ar[r]^-{\oplus \partial_C} &\displaystyle \bigoplus_{C\in \CuX} \IC \ar[r]^-{\sum_C \phi_{\ast}} &  \IX.
  }
}
\]

\begin{definition}
\label{def:CX}
The cokernel 
\[
  C(X/F) = \Coker\left(\d:\bigoplus_{C \in\CuX} K_2(\FC) \to  \IX\right)
\] 
of 
$\d$ defined above is called the \textbf{id\`ele class group} of $X$. 
When the base field $F$ is not particularly important, we also write as $\CX$ for simplicity. 
\end{definition}

When $X$ is a projective smooth variety over $F$, 
a normalization $\phi:C\to X$ in $\CuX$ 
extends to $\phibar:\Cbar \to X$ which is also in $\CuX$.  
Thus, $C=\Cbar$. 
In this case, we have 
$\CX = \SKX$ in terms of the algebraic $K$-theory (\Cf \cite{Bloch81}).


\begin{lem}
\label{lem:C}
	Let $\vp:X'\to X$ be a morphism of varieties over $F$. 
Then, there is a canonical homomorphism 
	$\vp_{\ast}: C(X') \to C(X)$. 
\end{lem}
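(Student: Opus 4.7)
The plan is to use the pushforward $\vp_* : \IXp \to \IX$ from Lem.~\ref{lem:I} and show that it descends to the cokernels: the induced map $\CXp \to \CX$ will then be the required $\vp_*$. What must be verified is that $\vp_*(\Im \partial') \subseteq \Im \partial$, where $\partial'$ and $\partial$ are the boundary maps from Def.~\ref{def:CX} for $X'$ and $X$ respectively. Fix $C' \in \CuXp$---the normalization of a curve $Z' \subset X'$---and $\xi \in K_2(\FCp)$, and let $Z := \ol{\vp(Z')} \subset X$ be the scheme-theoretic closure. I would split the analysis according to $\dim Z$.

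If $Z = \{y\}$ is a closed point of $X$, then $Z' \to \Spec F(y)$ and $\Cpbar$ is a smooth proper curve over $F$ whose field of constants $k_{C'}$ contains $F(y)$. Tracing Lem.~\ref{lem:I}, the element $\vp_*(\partial'(\xi))$ is supported at the component $y \in X_0$ of $\IX$, with value $\sum_{x' \in \Cpbar} N_{F(x')/F(y)}(\partial_{\FCpxp}(\xi))$. Weil reciprocity for $K_2$ on the smooth proper curve $\Cpbar/k_{C'}$ gives vanishing of the corresponding sum in $k_{C'}^{\times}$; applying $N_{k_{C'}/F(y)}$ gives vanishing in $F(y)^{\times}$, so $\vp_*(\partial'(\xi)) = 0 \in \IX$.

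If $Z$ is a curve, let $C \to Z$ be its normalization (so $C \in \CuX$) and $\ol{\vp}_C : \Cpbar \to \Cbar$ the induced finite map of smooth proper curves. Setting $\eta := N_{\FCp/\FC}(\xi) \in K_2(\FC)$, I claim $\vp_*(\partial'(\xi)) = \partial(\eta) \in \IX$. This is verified componentwise over $z \in \Cbar$: at $z \in C_0$ it follows from the compatibility of the $K_2$-norm $N_{\FCp/\FC}$ with the tame symbols summed over the places of $\FCp$ above $z$, while at $z \in \Cinf$ it follows from the compatibility of the norm with the completion $\FC \hookrightarrow \FCx$---both standard properties of Milnor $K$-theory.

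The chief technical input is the Weil reciprocity law in the contraction case; the remaining curve case is then a routine matter of tracking norms through the definitions in Lem.~\ref{lem:I} and Def.~\ref{def:CX}.
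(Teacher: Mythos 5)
Your proposal is correct and follows essentially the same route as the paper: push forward via Lem.~\ref{lem:I} and check compatibility with the boundary maps in the two cases, using Weil reciprocity (Bass--Tate) when the curve contracts to a closed point, and the compatibility of the $K_2$-norm with tame symbols and completions when it maps onto a curve. The only cosmetic difference is that you invoke reciprocity over the constant field of $\Cpbar$ and then norm down to $F(y)$, whereas the paper applies the Bass--Tate reciprocity law directly over $F(y)$; both are valid.
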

\begin{proof}
We show that $\vp_{\ast}:\IXp\to \IX$ (Lem.~\ref{lem:I}) induces 
$\vp_{\ast}: \CXp \to \CX$. 
Take the normalization $\phi':C'\to X' \in\CuXp$ 
of a curve $Z'$ in  $X'$.   
\begin{itemize}
	\item If the scheme theoretic closure of the image $Z = \ol{\vp(Z')}$ is a curve in $X$, 
	then 
	we denote by $\phi:C\to Z \inj X$ in $\CuX$ the normalization of $Z$, 
	and $\vp_{C}:C'\to C$ the morphism induced from $\vp$. 
	Consider the following diagram:
	\begin{equation}
		\label{eq:func}
		\vcenter{
	\xymatrix{
	K_2(\FCp) \ar[d]_{N_{\FCp/\FC}}\ar[r]^-{\partial_{C'}} & \ICp \ar[r]^{\phi'_{\ast}}\ar[d]^{(\vp_{C})_{\ast}} & \IXp\ar[d]^{\vp_{\ast}} \\ 
	K_2(\FC) \ar[r]^-{\partial_{C}}  & \IC \ar[r]^{\phi_{\ast}} & \IX, 
	}}
	\end{equation}
	where the left vertical map is the norm map.
	Since the norm maps and the tame symbol maps are commutative, 
	the left square of the diagram \eqref{eq:func} is commutative. 
	The commutativity of the right square in \eqref{eq:func} follows from Lem.~\ref{lem:I}. 
	 
	\item If the image $x = \vp(Z)$ is in $X_0$, 
	then the curve $C'$ is defined over $F':= F(x)$. 
	By the Weil reciprocity law on the Milnor $K$-groups (\Cf \cite{BT73}, Sect.~5),  
	the sequence 
\[
	\vcenter{
	\xymatrix{
	K_2(\FCp) \ar[r]^-{\d_{\Cpbar}} & \displaystyle \ICpbar = \bigoplus_{x' \in \Cpbar_0}\Fxpt \ar[r]^-{N_{\Cpbar}} & (F')^{\times}
	}}
\]
	is a complex, 
	where 
	$N_{\Cpbar} = \sum_{x'}N_{F(x')/F'}$  
	is the sum of the norm maps.
	We have the following diagram
	\begin{equation}
		\label{eq:func2}
	\vcenter{
	\xymatrix{
	K_2(\FCp) \ar[r]^-{\d_{C'}}\ar[d] & \ICp \ar[d]^-{ ({\vp_{x}})_{\ast}} \ar[r]^-{\phi'_{\ast}} & \IXp \ar[d]^{\vp_{\ast}} \\
	0 \ar[r] & I(x) = \Fxt \ar[r]^-{(\iota_x)_{\ast}} & \IX,  
	}}
	\end{equation}
	where $\vp_{x}:C' \to x$ is induced from $\vp$ 
	and the right horizontal map is given by the closed immersion 
	$\iota_x: x \inj X$. 
	The composite $(\vp_x)_{\ast} \circ \d_{C'} = N_{\Cpbar}\circ \d_{\Cpbar} = 0$ makes the above diagram commutative.
\end{itemize}
From the commutative diagrams \eqref{eq:func} and \eqref{eq:func2}, 
the map $\vp_{\ast}:\IXp \to \IX$ defines 
$\vp_{\ast}:\CXp \to \CX$. 
%
\end{proof}

%

By Lem.~\ref{lem:C}, the structure map $\gamma: X \to \Spec(F)$ induces 
\begin{equation}
\label{eq:NX}
	N_X:= \gamma_{\ast}: \CX \to C(\Spec(F)) = F^{\times}.
\end{equation}

\subsection*{Tame id\`ele class group}
We introduce a quotient $\CtX$ of $\CX$ which comes to classify \textit{tame coverings}   through the reciprocity map \eqref{eq:rhot}.  

\begin{definition}
\label{def:CtX}
The cokernel 
\[
C^t(X) = C^t(X/F) =  \Coker\left( \bigoplus_{C\in \CuX}\bigoplus_{x\in \Cinf} U^1K_2(F(C)_x) \to \CX \right)
\]
is called the \textbf{tame id\`ele class group} of $X$, 
where 
the map above 
is induced from the inclusion $U^1K_2(F(C)_x) \inj K_2(F(C)_x)$ 
for $C \in \CuX$ and $x \in C_{\infty}$. 
\end{definition}

\begin{lem}
\label{lem:perf}
Let $\wt{F} = F^{-p^{\infty}}$ be the perfect closure of $F$ 
with $\Char(F) = p$, 
that is, 
the field adjoined with all $p^r$-th roots to $F$ for all $r\ge 1$. 
Then, we have an isomorphism  
\[
\Ct(X/F)/l^m \isomto \Ct(X\otimes_F \wt{F}/\wt{F})/l^m,
\] 
for any prime number $l\neq p$ and $m\in \Z_{>0}$.
\end{lem}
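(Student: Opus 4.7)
My plan is to construct pullback maps $\vp^*$ for purely inseparable base changes, use them to express $\Ct(X\otimes_F\wt{F}/\wt{F})$ as a filtered colimit of $\Ct$ along finite purely inseparable base changes, and then exploit that $\vp_*\circ\vp^*$ and $\vp^*\circ\vp_*$ are multiplication by a power of $p$---which is invertible modulo $l^m$ since $l\neq p$.

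First, for the base change $\vp\colon X'=X\otimes_F F'\to X$ along a purely inseparable extension $F'/F$, I would construct a pullback $\vp^*\colon\CtX\to \Ct(X'/F')$. Such a base change is a universal homeomorphism and commutes with normalization, so every $x\in X_0$ has a unique preimage $x'\in X'_0$, every $C\in\CuX$ corresponds to a unique $C'\in\Cu(X')$, and the residue and completion field extensions are all purely inseparable. Define $\vp^*$ componentwise by the field inclusions $F(x)^\times\hookrightarrow F'(x')^\times$ and $K_2(F(C)_x)\to K_2(F'(C')_{x'})$. Functoriality of the tame symbol and Weil reciprocity (the same facts used in Lem.~\ref{lem:I} and Lem.~\ref{lem:C}) guarantee that $\vp^*$ commutes with the boundary $\d_C$ on function fields of curves; together with the fact that field inclusions preserve the $U^1K_2$ filtration, this shows $\vp^*$ descends to a well-defined map on $\Ct$.

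Next, writing $\wt{F}=\varinjlim_r F_r$ with $F_r:=F^{-p^r}$, the pullbacks assemble into
\[
\Ct(X\otimes_F\wt{F}/\wt{F}) \;=\; \varinjlim_r \Ct(X\otimes_F F_r/F_r),
\]
since direct sums, Milnor $K$-groups, the $U^1K_2$ filtration, and cokernels all commute with filtered colimits. So it suffices to prove that $\vp_r^*\colon\CtX/l^m\to \Ct(X_r/F_r)/l^m$ is an isomorphism for each $r\ge 1$, where $X_r:=X\otimes_F F_r$. For this I would compose $\vp_r^*$ with the pushforward $\vp_{r,*}$ of Lem.~\ref{lem:C}: because every residue extension $F_r(x_r)/F(x)$ and every completion extension $F_r(C_r)_{x_r}/F(C)_x$ is purely inseparable of degree $p^r$, the norm map equals the $p^r$-th power on each component, so $\vp_{r,*}\circ\vp_r^*$ (resp.\ $\vp_r^*\circ\vp_{r,*}$) is multiplication by $p^r$ on $\CtX$ (resp.\ on $\Ct(X_r/F_r)$). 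Since $p^r$ is a unit in $\Z/l^m$, both compositions become isomorphisms after reduction mod $l^m$, and hence so do $\vp_r^*$ and $\vp_{r,*}$ individually.

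The main obstacle I anticipate is the well-definedness of $\vp^*$ at the level of $\Ct$---specifically, checking that the inclusion-induced map on idèle groups commutes with $\d_C$, which is a diagram chase parallel to (but in the opposite direction of) the one in the proof of Lem.~\ref{lem:C}, notably diagram \eqref{eq:func2} for the case where a curve in $X'$ has image a closed point in $X$. The remaining ingredients---purely inseparable norms being $p^r$-th powers, and field inclusions preserving $U^1K_2$---are essentially formal.
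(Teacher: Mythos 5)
Your overall strategy is the same as the paper's (compare $X$ with its purely inseparable base changes via inclusion--induced maps and norms, and use that the relevant degrees are $p$-powers, hence invertible mod $l^m$), but two steps do not hold as you state them. First, it is not true that every residue extension $F_r(x_r)/\Fx$ and every completion extension $F_r(C_r)_{x_r}/\FCx$ has degree $p^r$: closed points of $X$ and points of $\Cinf$ can have residue fields that are already inseparable over $F$ (e.g.\ the point of $\mathbb{A}^1_F$ cut out by $u^p-s$ when $s\notin F^p$), so the degrees are $p$-powers that vary from component to component. Consequently $\vp_{r,*}\circ\vp_r^*$ is \emph{not} multiplication by $p^r$ on $\Ct(X)$; it is only induced by componentwise multiplication by varying $p$-powers on the defining presentation. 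One can still conclude it is an isomorphism mod $l^m$, but only by arguing term by term on that presentation --- which is exactly what the paper does (it shows both vertical maps in its diagram \eqref{eq:kF} become bijective after $\otimes\,\Z/l^m$ and then passes to cokernels), so the composite device buys you nothing and, as written, the scalar claim is false.

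Second, the identity $\Ct(X\otimes_F\wt{F}/\wt{F})=\varinjlim_r \Ct(X_r/F_r)$ is not the formal statement you invoke: the id\`ele components at $x\in\Cinf$ are $K_2$ of \emph{completions}, and completion does not commute with the filtered colimit --- $\wt{F}(\wt{C})_x$ is the completion of $\varinjlim_r F_r(C_r)_{x_r}$ and is strictly larger in general, so $K_2$ of it is not the colimit of the finite-level $K_2$'s. What saves the tame class group is precisely that $K_2(K)/U^1K_2(K)\simeq K_2(k_K)\oplus k_K^{\times}$ depends only on the residue field, and residue fields and function fields do pass to the purely inseparable colimit; this reduction (via \cite{FV}, Chap.~IX, Prop.~2.2) is the actual engine of the paper's proof and is missing from yours. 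Finally, the difficulty you single out for the well-definedness of $\vp^*$ is misplaced: a purely inseparable base change is a universal homeomorphism, so no curve collapses to a closed point and the case of diagram \eqref{eq:func2} never occurs; the genuine delicacy is that the tame symbol commutes with field inclusions only up to the ramification index, and a purely inseparable constant extension can be ramified (with $e$ a positive power of $p$) precisely at closed points with inseparable residue field, so ``functoriality of the tame symbol'' needs an argument (mod $l^m$) at those components rather than being automatic.
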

\begin{proof}
We denote by $\wt{X} = X\otimes_F \wt{F}$ 
the base change to $\wt{F}$ of $X$. 
Since the extension $\wt{F}/F$ is purely inseparable, 
the projection $\wt{X} \to X$ is a homeomorphism (\cite{LiuAG}, Prop.~3.2.7 (c)). 
In particular, there are bijections $X_0 \simeq \wt{X}_0$ and $\Cu(X) \simeq \Cu(\wt{X})$. 
Consider the following commutative diagram:  
\begin{equation}
	\label{eq:kF}
\vcenter{
\entrymodifiers={!! <0pt, .8ex>+}
\xymatrix@R=5mm@C=3mm{
\displaystyle \bigoplus_{C\in \CuX} K_2(\FC)\ar[d] \ar[r] &\displaystyle \bigoplus_{x\in X_0}F(x)^{\times} \oplus \bigoplus_{C\in \CuX}\bigoplus_{x\in \Cinf} K_2(\FCx)/U^1K_2(\FCx)\ar[d] \\
\displaystyle \bigoplus_{C\in \Cu(\wt{X})} K_2(\wt{F}(C)) \ar[r] &\displaystyle \bigoplus_{x\in \wt{X}_0}\wt{F}(x)^{\times} \oplus \bigoplus_{C\in \Cu(\wt{X})}\bigoplus_{x\in \Cinf} K_2(\wt{F}(C)_x)/U^1K_2(\wt{F}(C)_x),
}
}
\end{equation}
where the vertical maps are given by the inclusion map $F\inj \wt{F}$. 
Note that the cokernel of the horizontal maps in the diagram \eqref{eq:kF} are 
$\Ct(X/F)$ and $\Ct(\wt{X}/\wt{F})$.

For each $C\in \CuX$, the corresponding 
curve in $\wt{X}$ is $\wt{C} := C\otimes_F \wt{F} \in \Cu(\wt{X})$.  
The extension $\wt{F}(\wt{C}) = F(C)\wt{F}$ of $F(C)$ 
is a constant field extension. 
For each $r>0$, 
we have 
\[
N_{\FC F^{-p^r}/\FC} \circ j_{\FC F^{-p^r}/\FC} = [\FC F^{-p^r}:\FC]: 
K_2(\FC) \to K_2(\FC),
\] 
where $j_{\FC F^{-p^r}/\FC}$ is the map 
induced from the inclusion $\FC \inj \FC F^{-p^r}$ (\Cf \eqref{eq:j}). 
Applying $-\otimesZ \Z/l^m$, 
we have $K_2(\FC)/l^m \simeq K_2(\FC F^{-p^r})/l^m$.   
The left vertical map 
in the above diagram \eqref{eq:kF} becomes bijective 
after applying $-\otimes_{\Z} \Z/l^m$. 
Moreover, 
we have 
\begin{align*}	
K_2(\FCx)/U^1K_2(\FCx) &\simeq K_2(\Fx)\oplus \Fxt \quad (C\in \CuX, x\in\Cinf),\ \mbox{and}\\
K_2(\wt{F}(C)_x)/U^1K_2(\wt{F}(C)_x) &\simeq K_2(\wt{F}(x)) \oplus \wt{F}(x)^{\times} \quad(C\in \Cu(\wt{X}), x\in\Cinf)\ 
\end{align*}
(\Cf \cite{FV}, Chap.~IX, Prop.~2.2). 
In the same way as above, 
$\wt{F}(x\otimes_F \wt{F}) \simeq F(x)\wt{F}$ for each $x \in X_0$ or $x\in \Cinf$ for some $C\in \CuX$. 
The right vertical map in \eqref{eq:kF} is an isomorphism 
after applying $- \otimes_{\Z} \Z/l^m$. 
Hence, there is an isomorphism 
\[
\Ct(\wt{X}/\wt{F})/l^m \simeq \Ct(X/F)/l^m.
\]
\end{proof}

Corresponding to the tame id\`ele class group 
$\CtX$, 
we define 
the \textit{abelian tame fundamental group} of $X$ 
as a quotient of $\piabX$.

\begin{definition}
	\label{def:pit}
	The \textbf{abelian tame fundamental group} is 
	defined by  
	\[
	\pitabX = \pitab(X/F) =  
	\Coker\left(\bigoplus_{C \in \CuX}\bigoplus_{x \in \Cinf} I_{\FCx}^1 
	\to \piabX\right)
	\]
	where $I_{\FCx}^1$ is the $p$-Sylow subgroup of 
	the abelian Galois group $G_{\FCx}^{\ab}$ 
	for $\phi:C\to X\in \CuX, x\in \Cinf$, 
	and the map above is induced from 
	the composition $I_{\FCx}^1 \inj G_{\FCx}^{\ab} \inj \piabC \onto{\phi_{\ast}} \piabX$.
\end{definition}

\subsection*{Reciprocity map}
In the rest of this section, we consider 
\begin{itemize}
	\item $k$: a local field, 
	\item $X$: a \textit{smooth} variety over $k$, and
	\item $\gamma:X\to \Spec(k)$: the structure map.   
\end{itemize}
For the variety $X$, 
we introduce the reciprocity map 
\begin{equation}
\label{eq:rhoX}
  \rho_X: C(X) := C(X/k) \to \piabX.
\end{equation}
First, we define a group homomorphism 
\[
\wt{\rho}_X:\IX\to \piabX; (\xi_x)_x \mapsto \sum_x \wt{\rho}_{X,x}(\xi_x)
\] 
by introducing  
the following homomorphisms $\wt{\rho}_{X,x}$: 
\begin{itemize}
\item For $x \in X_0$, 
we denote by $\iota_x:x \inj X$ the closed immersion. 
The reciprocity map 
$\rho_{\kx}: \kxt \to G_{\kx}^{\ab}$ of 
the local class field theory (for the residue field $\kx$) gives 
\[
\xymatrix{
\wt{\rho}_{X,x}:\kx^{\times} \ar[r]^-{\rho_{\kx}}& G_{\kx}^{\ab} = \piab(x) \ar[r]^-{(\iota_x)_{\ast}} &\piabX.
}
\] 

\item For $\phi:C\to X \in\CuX$ and $x\in\Cinf$, 
the completion $\kCx$ is a $2$-dimensional local field (\Cf \cite{Hir17}, Def.~2.1) 
with residue field $\kx$. 
The reciprocity map $\rho_{\kCx}:K_2(\kCx) \to G_{\kCx}^{\ab}$ of the 2-dimensional local class field theory (for $\kCx$) induces 
\[
\xymatrix{
\wt{\rho}_{X,x}:K_2(\kCx) \ar[r]^-{\rho_{\kCx}} & G_{\kCx}^{\ab} \subset G_{\kC}^{\ab} = \piab(\Spec(\kC) \ar[r]^-{(j_C)_{\ast}} & \piabX,
}
\]
where the last homomorphism 
is induced from 
$j_C: \Spec(\kC) \inj C \onto{\phi}X$.
\end{itemize}

For a morphism $\vp:X'\to X$ of varieties over $k$, 
from the construction of $\wt{\rho}_X$ above 
and some properties of 
the reciprocity map of $2$-dimensional local class field theory 
(\Cf \cite{Hir17}, Prop.~2.2), 
we have the following commutative diagram: 
\begin{equation}
\label{eq:rho-f} 
\vcenter{
\xymatrix{
\IXp \ar[d]^{\vp_{\ast}} \ar[r]^{\wt{\rho}_{X'}} & \piab(X') \ar[d]^{\vp_{\ast}} \\ 
\IX \ar[r]^-{\wt{\rho}_X} & \piabX.
}}
\end{equation}

\begin{lem}
\label{lem:rho}
$\wt{\rho}_X:\IX \to \piabX$ factors through $\CX$. 
\end{lem}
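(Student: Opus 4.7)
The plan is to show $\wt\rho_X \circ \partial = 0$, where $\partial: \bigoplus_{C \in \CuX} K_2(\FC) \to \IX$ is the defining map from Def.~\ref{def:CX}. Since $\partial$ decomposes as a direct sum over $C \in \CuX$ of composites $\phi_* \circ \partial_C$, it suffices to fix a single normalization $\phi: C \to X$ in $\CuX$ and verify that $\wt\rho_X \circ \phi_* \circ \partial_C: K_2(\FC) \to \piabX$ vanishes. First I would apply the functoriality square \eqref{eq:rho-f} to the morphism $\phi: C \to X$ of varieties over $k$; this yields $\wt\rho_X \circ \phi_* = \phi_* \circ \wt\rho_C$, reducing the problem to establishing
\[
\wt\rho_C \circ \partial_C = 0 \quad \text{as a map}\ K_2(\FC) \to \piab(C).
\]

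Next, for $\xi \in K_2(\FC)$, I would unwind the definitions of $\partial_C$ and $\wt\rho_C$ to recognise $\wt\rho_C(\partial_C(\xi))$ as a single sum, taken over all closed points $x$ of the regular compactification $\Cbar$ (equivalently, $x \in C_0$ or $x \in \Cinf$), of local reciprocity contributions: at $x \in C_0$ this is the tame symbol $\partial_{\FCx}$ followed by one-dimensional local class field theory for the finite extension $\kx/k$, while at $x \in \Cinf$ it is Kato's two-dimensional reciprocity $\rho_{\FCx}: K_2(\FCx) \to G_{\FCx}^{\ab}$ followed by $(j_C)_*: G_{\FC}^{\ab} \to \piab(C)$. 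The vanishing of this total sum in $\piab(C)$ is precisely the well-definedness of the reciprocity map for an open curve over a local field, established in \cite{Hir17} (building on \cite{Hir10}); the essential inputs there are the compatibility of Kato's reciprocity with the tame symbol and a Weil-type reciprocity law in $K_2$ for $\Cbar/k$.

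The main obstacle is thus the curve case $\wt\rho_C \circ \partial_C = 0$, but this is not a new obstacle for the present paper: it is exactly what has been proved in the author's earlier work, for any smooth open curve over $k$ (and the curves $C$ obtained as normalizations of curves in the smooth variety $X$ are of this form). The reduction carried out above is essentially formal, relying only on the naturality of $\wt\rho$ encoded in \eqref{eq:rho-f} together with the decomposition of $\partial$ by curves in $X$.
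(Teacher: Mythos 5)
Your proposal is correct and follows essentially the same route as the paper: decompose $\d$ curve by curve, use the functoriality square \eqref{eq:rho-f} for $\phi\colon C\to X$ to reduce to $\wt{\rho}_C\circ\d_C=0$, and settle that one-dimensional case by the reciprocity law for open curves over a local field from \cite{Hir10} (\cite{Hir17}). The only cosmetic difference is that the paper phrases the curve case as a direct citation (Saito's reciprocity law via Sect.~2 of \cite{Hir10}) rather than unwinding the local contributions as you do.
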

 \begin{proof}
 \textbf{(The case \mbox{{\boldmath$\dim(X) =0, 1$}})}\ 
If $\dim X =0$, then $I(X) = k(X)^{\times} = \CX$ and there is nothing to show.  
In the case where $X$ is a (smooth) curve,   
the assertion follows from Sect.\ 2 of \cite{Hir10}.  
The main ingredient of the discussion in \opcit\ is   
the reciprocity law of $\kX$ (\cite{Sai85}, Chap.~II, Prop.~1.2).

\smallskip
\noindent
\textbf{(The case \mbox{{\boldmath$\dim(X) > 1$}})}\  
For a general variety $X$, 
for each $\phi:C\to X\in\CuX$, 
the maps $\wt{\rho}_C$ and $\wt{\rho}_X$ defined above  give the diagram 
\[
\xymatrix@R=5mm{
	K_2(\kC)\ar[r]^-{\d_C} & I(C) \ar[d]^{\phi_{\ast}} \ar[r]^{\wt{\rho}_C} & \piabC\ar[d]^{\phi_{\ast}} \\
	 & \IX \ar[r]^{\wt{\rho}_X} & \piabX
}
\]
with $\wt{\rho}_C\circ \d_C =0$  
from the case of $\dim X =1$ discussed above. 
Since the above diagram \eqref{eq:rho-f} is commutative, 
the assertion $\wt{\rho}_X \circ \d = 0$ follows. 
\end{proof}

\begin{definition}
\label{def:rho}
The induced map $\rho_X:\CX \to \piabX$ from 
$\wt{\rho}_X$ by Lem.~\ref{lem:rho} is 
called the \textbf{reciprocity map} for $X$. 	
\end{definition}

The commutative diagram \eqref{eq:rho-f} and 
Lem.~\ref{lem:rho} say that 
for each $\vp:X'\to X$, the reciprocity maps make the 
following diagram commutative:
\[
\vcenter{
\xymatrix{
\CXp \ar[d]^{\vp_{\ast}} \ar[r]^{\rho_{X'}} & \piab(X') \ar[d]^{\vp_{\ast}} \\ 
\CX \ar[r]^-{\rhoX} & \piabX 
}}
\]

\noindent
In particular, the structure map $\gamma:X \to \Spec(k)$ gives 
the following commutative diagram: 
\begin{equation}
\label{eq:VX}
\vcenter{
\xymatrix{
  0 \ar[r] & \CXg \ar[r]\ar[d]^{\rhoX}& \CX \ar[r]^-{N_X}\ar[d]^{\rhoX} &  k^{\times} \ar[d]^{\rho_k}\\
  0\ar[r]  & \piabXg \ar[r] & \piabX \ar[r]^-{\gamma_{\ast}} & G_k^{\ab},
}}
\end{equation}
where 
$N_X$ is defined in \eqref{eq:NX} and 
$\rho_k = \rho_{\Spec(k)}$ is the reciprocity map of $k$ and 
the groups $\CXg$ and $\piabXg$ are defined by the exactness of the horizontal rows. 
From the $2$-dimensional local class field theory (\cite{Kat87}, see also \cite{Hir17}, Prop.~2.5), 
the reciprocity map $\rhoX$ induces a map 
from 
the tame class group (Def.~\ref{def:CtX}) 
to the tame fundamental group (Def.~\ref{def:pit}) 
as the following commutative diagram indicates: 
\begin{equation}
	\label{eq:rhot}
	\vcenter{
	 \xymatrix{
	 \CX  \ar@{->>}[d] \ar[r]^{\rhoX} & \piabX \ar@{->>}[d] \\
	 \CtX \ar@{-->}[r] & \pitabX,
	 } }
\end{equation}
where the vertical maps are the quotient maps. 
The induced map $\CtX \to \pitabX$ is also denoted by $\rho_X$.
As in \eqref{eq:VX}, we have the following commutative diagram with exact rows: 
\begin{equation}
\label{eq:VtX}
\vcenter{
\xymatrix{
  0 \ar[r] & \CtXg \ar[r]\ar[d]^{\rhoX}& \CtX \ar[r]^-{N_X}\ar[d]^{\rhoX} &  k^{\times} \ar[d]^{\rho_k}\\
  0\ar[r]  & \pitabXg \ar[r] & \pitabX \ar[r]^-{\gamma_{\ast}} & G_k^{\ab},
}}
\end{equation}

\section{Products of curves}
%

\subsection*{Somekawa $K$-groups}
First, we recall the definition of the Mackey products and that of the Somekawa $K$-groups 
following \cite{RS00}, \cite{Yam09} and \cite{KY13}:  
Recall that 
a \textbf{Mackey functor} $A$ over a field $F$ 
is a contravariant 
functor from the category of \'etale schemes over $F$ 
to that of abelian groups 
equipped with a covariant structure 
for finite morphisms satisfying some conditions 
(for the precise definition, see \cite{RS00} Section 3, or \cite{KY13}, Sect.~2).
For a Mackey functor $A$ over $F$, 
we denote by $A(E)$ 
its value $A(\Spec(E))$  
for a field extension $E$ over $F$.

\begin{definition} 
	\label{def:otimesM}
For Mackey functors $A_1,\ldots , A_n$ over $F$, 
their \textbf{Mackey product} 
$A_1\otimesM \cdots \otimesM A_n$ 
is defined as follows: 
For any finite field extension $E/F$, 
\begin{equation}
	\label{eq:otimesM}
\left(A_1\otimesM \cdots \otimesM A_n \right) (E) 
 := \left(\bigoplus_{E'/E:\, \mathrm{finite}} A_1(E') \otimesZ \cdots \otimesZ A_n(E')\right) /R, 
\end{equation}
where $R$ is the subgroup generated 
by elements of the following form: 

\sn
\PF 
For any finite field extensions 
$E \subset E_1 \subset E_2$, and 
if $x_{i_0} \in A_{i_0}(E_2)$ and $x_i \in A_i(E_1)$ 
for all $i\neq i_0$, then 
\[
  j^{\ast}(x_1) \otimes \cdots \otimes x_{i_0} \otimes \cdots \otimes j^{\ast}(x_n) - x_1\otimes \cdots \otimes j_{\ast}(x_{i_0})\otimes \cdots \otimes x_n,
\]
where $j = j_{E_2/E_1}:\Spec(E_2) \to \Spec(E_1)$ is the canonical map. 
\end{definition}

For the Mackey product  
$A_1\otimesM \cdots \otimesM A_n$,  
we write $\set{x_1,\ldots,x_n}_{E/F}$ 
for the image of 
$x_1 \otimes \cdots \otimes x_n \in 
A_1(E) \otimes \cdots \otimes A_n(E)$ in the product 
$\left(A_1\otimesM \cdots \otimesM A_n\right)(F)$. 
For any field extension $E/F$, 
the canonical map $j=j_{E/F}:F\inj E$ induces  
the pull-back 
\[
  \Res_{E/F} := j^{\ast}: \left(A_1\otimesM \cdots \otimesM A_n\right)(F) \longrightarrow 
\left(A_1\otimesM \cdots \otimesM A_n\right)(E).
\] 
If the extension $E/F$ is finite, 
then the push-forward 
\[
  N_{E/F} := j_{\ast}: \left(A_1\otimesM \cdots \otimesM A_n\right)(E) \longrightarrow 
\left(A_1\otimesM \cdots \otimesM A_n\right) (F)
\] 
is given by 
$N_{E/F}(\set{x_1,\ldots ,x_n}_{E'/E}) = \set{x_1,\ldots, x_n}_{E'/F}$ 
on symbols. 
%
%

\begin{definition}
\label{def:Som}
Let $\F_1,\ldots \F_n$ be homotopy invariant Nisnevich sheaves with transfers over $F$. 
By considering these sheaves as Mackey functors (\cite{KY13}, Sect.~2), 
the \textbf{Somekawa $K$-group} $K(F;\F_1,\ldots ,\F_n)$ attached to $\F_1,\ldots ,\F_n$ 
is defined by a quotient 
\begin{equation}
	\label{eq:Som}
  K(F;\F_1,\ldots ,\F_n) := \left\{\left(\F_1 \otimesM \cdots \otimesM \F_n \right) (F)\right\}/R, 
\end{equation}
where $R$ is a subgroup 
which produces 
``the Weil reciprocity law'' (for the precise definition, see \cite{KY13} Def.~5.1). 
\end{definition}

For semi-abelian varieties $A_1,\ldots , A_n$ over $F$ 
by considering them as Nisnevich sheaves (\Cf \cite{KY13}, Sect.~2.15), 
this $K$-group coincides with that of defined in \cite{Som90} 
(\cite{KY13}, Rem.~5.2).

In the rest of this section, we use 
\begin{itemize}
	\item $F$: a field with $\Char(F) = p\ge 0$, 
	\item $X_i \subset \Xbar_i\,:$ open curves over a field $F$ (\Cf Notation)  
	with $X_i(F) \neq \emptyset$ for $i = 1,\ldots , n$,  
	\item $X = X_1\times \cdots \times X_n \subset \Xbar = \Xbar_1 \times \cdots \times \Xbar_n$, and 
	\item $\Jac(X_i):$ the generalized Jacobian variety 
	of $X_i$. 
\end{itemize}

\begin{lem}[\Cf \cite{Yam09}, Thm.\ 2.2]
\label{lem:dec}
	Assume that $F$ is a \textbf{perfect} field.  
%
	Then, there is an isomorphism  
	\[
	   \CtX \simeq \bigoplus_{r = 0}^n\, \bigoplus_{1 \le i_1 < \cdots < i_r \le n} K(F;\Jac(X_{i_1}),\ldots , \Jac(X_{i_r}), \Gm),
	\]
	where $\CtX$ is the tame id\`ele class group of $X$ (Def.~\ref{def:CtX}).
\end{lem}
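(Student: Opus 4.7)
The plan is to adapt the strategy of \cite{Yam09}, Thm.~2.2, which handles the proper case $\Xbar = \Xbar_1\times\cdots\times \Xbar_n$, to the open setting. The argument proceeds in three stages: realize $\CtX$ as a single Somekawa $K$-group attached to the motivic $h_0$'s of the factors, split its entries using the given rational points, and then expand by multilinearity.

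First I would prove an isomorphism
\[
\CtX \;\simeq\; K(F;\hNis(X_1),\ldots, \hNis(X_n), \Gm),
\]
where $\hNis(X_i)$ is the homotopy invariant Nisnevich sheaf with transfers associated to $X_i$, whose value on a finite extension $E/F$ is $\Z[X_i(E)]$ modulo divisors of rational functions on $X_i\otimes_F E$. The presentation of $\CtX$ as a cokernel involving $\bigoplus_{C\in\CuX}K_2(\FC)$, followed by the further quotient by the $U^1K_2(\FCx)$ at $x\in\Cinf$, matches the defining relations of this Somekawa $K$-group: a relation coming from a function on $C\in\CuX$ encodes simultaneously the projection formula \PF and the Weil reciprocity used to define the Somekawa quotient in \cite{KY13}, Def.~5.1. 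Perfectness of $F$ enters here through the clean description $K_2(\FCx)/U^1K_2(\FCx)\simeq K_2(\kx)\oplus \kxt$ recalled in the proof of Lem.~\ref{lem:perf}, which ensures that the tame quotient has no inseparable pathologies.

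Next, the chosen rational points $P_i\in X_i(F)$ (which exist by hypothesis) induce splittings of the Albanese map
\[
\hNis(X_i) \;\simeq\; \Z \oplus \Jac(X_i)
\]
as Mackey functors, where $\Jac(X_i)$ is the generalized Jacobian regarded as a Nisnevich sheaf via \cite{KY13}, Sect.~2.15. Substituting into the Mackey product defining the Somekawa $K$-group, distributing $\otimesM$ over direct sums, and using the identity $K(F;\Z,\F_1,\ldots,\F_k)\simeq K(F;\F_1,\ldots,\F_k)$ (since $\Z$ is the unit for $\otimesM$ and the Somekawa relations are compatible), yields the claimed decomposition; the empty-index summand $r=0$ is $K(F;\Gm)=F^{\times}$, and the other summands are indexed exactly by the subsets $\{i_1<\cdots<i_r\}\subset\{1,\ldots,n\}$.

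The main obstacle is the first stage, the identification of $\CtX$ with a single Somekawa $K$-group. For the proper case this is classical (Somekawa, Raskind--Spiess, Kahn--Yamazaki); in our open setting one must carefully handle the contributions of the points of $\Cinf$. Under the decomposition $K_2(\FCx)/U^1K_2(\FCx)\simeq K_2(\kx)\oplus \kxt$, the $\kxt$-summand must be matched with the torus part of the generalized Jacobian (coming from the missing points), while the $K_2(\kx)$-summand must be shown to vanish after imposing Steinberg and Somekawa relations. Verifying this matching functorially in finite extensions of $F$ and compatibly with transfers is the technical heart of the proof, and is where the hypothesis that $F$ be perfect plays its essential role.
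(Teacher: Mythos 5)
Your overall architecture coincides with the paper's: identify $\CtX$ with the Somekawa $K$-group $K(F;\hNis(X_1),\ldots,\hNis(X_n),\Gm)$, use the rational points $X_i(F)\neq\emptyset$ to split $\hNis(X_i)\simeq\Z\oplus\Jac(X_i)$, and distribute over the Mackey product using that $\Z$ acts as a unit. The second and third stages are fine and match the paper. The problem is the first stage. The paper does not prove the identification $\CtX\simeq K(F;\hNis(X_1),\ldots,\hNis(X_n),\Gm)$ by a generators-and-relations comparison; it obtains it by citation: $\CtX$ is identified with Wiesend's tame ideal class group $C_1(X)$ (\cite{Yam13}, Rem.~5.5), then $C_1(X)\simeq\Hom_{\DM}(\Z,M(X)(1)[1])$ by \cite{Yam13}, Thm.~1.3, and finally $\Hom_{\DM}(\Z,M(X)(1)[1])\simeq K(F;\hNis(X_1),\ldots,\hNis(X_n),\Gm)$ by \cite{KY13}, Thm.~12.3, with $\hNis(X_i)$ computed as a relative Picard sheaf by \cite{KY13}, Lem.~11.2. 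You instead propose to verify directly that the cokernel presentation of $\CtX$ (relations from $K_2(\FC)$ for \emph{all} curves $C\in\CuX$, including those collapsing to points or not dominant over any factor, plus the quotient by $U^1K_2(\FCx)$) matches the projection-formula and Weil-reciprocity relations of the Somekawa group, that the $\kxt$-summands at $x\in\Cinf$ match the torus part of the generalized Jacobian, and that the $K_2(\kx)$-summands die. You correctly name this as the technical heart --- but you do not carry it out, and it is not a routine check: it is essentially the content of the two cited theorems (a motivic/transfer-theoretic statement whose known proofs go through Voevodsky's category, not through an elementary symbol computation). As written, the proposal therefore has a genuine gap at its central step.

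A smaller point: your claim that perfectness of $F$ enters through the splitting $K_2(\FCx)/U^1K_2(\FCx)\simeq K_2(\kx)\oplus\kxt$ is misplaced; that splitting holds for any complete discrete valuation field and has nothing to do with perfectness. Perfectness is needed so that the motivic machinery of \cite{Yam13} and \cite{KY13} (equivalently, the structural results on $\hNis(X_i)$ and the comparison with the generalized Jacobian) applies. Also, your description of $\hNis(X_i)(E)$ as ``$\Z[X_i(E)]$ modulo divisors of rational functions on $X_i\otimes_F E$'' is imprecise: it is the relative Picard group of $(\Xbar_i\otimes_F E, (\Xbar_i\ssm X_i)\otimes_F E)$, i.e.\ zero-cycles on $X_i\otimes_F E$ (not just rational points) modulo divisors of functions congruent to $1$ along the boundary; dropping the boundary condition would give the wrong group.
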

\begin{proof}
The  tame id\`ele class group $C^t(X)$ 
is isomorphic to \emph{Wiesend's tame ideal class group} $C_1(X)$ 
in the sense of \cite{Yam13} (see also \cite{Yam13}, Rem.~5.5). 
From \cite{Yam13}, Thm.~1.3, 
we obtain
\begin{equation}
\label{eq:DM}
	C_1(X) \simeq \Hom_{\DM}(\Z(-1)[-1],M(X)) \simeq  
\Hom_{\DM}(\Z,M(X)(1)[1]),
\end{equation}
where $\DM = \DM_{\mathrm{Nis}}^{\mathrm{eff},-}(F)$ 
is the triangulated tensor category of Voevodsky's motivic complexes (\cite{V}).  
Following \cite{V} and \cite{KY13}, Sect.~9, 
we define $\hNis(X_i) := \hNis(L(X_i))$. 
By \cite{KY13}, Thm.~12.3, the far right 
in \eqref{eq:DM} has a description
\begin{equation}
	\label{eq:KSconj}
\Hom_{\DM}(\Z,M(X)(1)[1]) \simeq 
K(F; \hNis(X_1),\ldots,\hNis(X_n),\Gm),
\end{equation}
where the right is the Somekawa $K$-group
for homotopy invariant Nisnevich sheaves with transfers (\cite{KY13}, Def.~5.1). 
From \cite{KY13}, Lem.\ 11.2, 
$\hNis(X_i)$ coincides with the presheaf of relative Picard groups with respect to $(\Xbar_i,X_i)$. From the assumption $X_i(F)\neq \emptyset$, we have 
\[
\hNis(X_i) \simeq \Z \oplus \Jac(X_i).
\]
Here, $\Jac(X_i)$ is the generalized Jacobian variety 
regarded as a Nisnevich sheaf with transfers (\Cf \cite{KY13}, 2.15). 
Since $K(F; \hNis(X_1),\ldots,\hNis(X_n),\Gm)$ is a quotient 
of the Mackey product (as noted in \eqref{eq:Som}), 
we have 
\begin{align*}
&K(F; \hNis(X_1),\ldots,\hNis(X_n),\Gm) \\
&\simeq K(F;\Z , \hNis(X_2), \ldots ,\hNis(X_n),\Gm) \oplus 
K(F;\Jac(X_1) , \hNis(X_2), \ldots ,\hNis(X_n),\Gm) \\
&\simeq 
K(F;\hNis(X_2), \ldots ,\hNis(X_n),\Gm) \oplus 
K(F;\Jac(X_1) , \hNis(X_2), \ldots ,\hNis(X_n),\Gm). 
\end{align*}
Inductively, we obtain the decomposition of the $K$-group in \eqref{eq:KSconj}:  
\[
K(F; \hNis(X_1),\ldots,\hNis(X_n),\Gm) \simeq 
\bigoplus_{r=0}^n \bigoplus_{1 \le i_1 < \cdots < i_r \le d} K(F; \Jac(X_{i_1}),\ldots, \Jac(X_{i_r}), \Gm).
\]
The assertion follows from these isomorphisms. 
\end{proof}

\begin{lem}
\label{lem:dec2}
	Let $l$ be a prime number $l\neq p$ and $m\in \Z_{>0}$. 
	We assume that $F$ is perfect, and the following condition:
	\begin{description}
		\item [{$(\mathbf{Surj})\,$}] 
	 For any finite extensions $F''\supset F'\supset F$, 
the norm map 
\[
\Ct(X_i\otimes_F F'')^{\geo}/l^m \to \Ct(X_i \otimes_F F')^{\geo}/l^m
\] 
is surjective for each $1\le i \le n$.
	\end{description} 
	Then, we have 
	\[
	\CtXg/l^m \simeq \bigoplus_{i=1}^n \Ct(X_i)^{\geo}/l^m.
	\]
\end{lem}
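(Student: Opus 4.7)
My plan is to read this lemma off from the Mackey/Somekawa decomposition of Lem.~\ref{lem:dec} once the $r=0$ and $r=1$ pieces of that decomposition have been isolated and the higher mixed pieces killed modulo $l^m$. Concretely, since $F$ is perfect, Lem.~\ref{lem:dec} gives
\[
\CtX \simeq \bigoplus_{r=0}^n \bigoplus_{1\le i_1<\cdots<i_r\le n} K(F;\Jac(X_{i_1}),\ldots,\Jac(X_{i_r}), \Gm).
\]
The norm map $N_X:\CtX \to F^\times$ identifies, under this decomposition, with the projection to the $r=0$ summand $K(F;\Gm)\simeq F^\times$; hence
\[
\CtXg \simeq \bigoplus_{r=1}^n \bigoplus_I K(F;\Jac(X_{i_1}),\ldots,\Jac(X_{i_r}), \Gm).
\]
The singleton summands ($r=1$) are identified with $\Ct(X_i)^\geo$ by applying Lem.~\ref{lem:dec} once more to the single curve $X_i$, and these are exactly the right-hand side of the claim. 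It therefore suffices to prove
\[
K(F;\Jac(X_{i_1}),\ldots,\Jac(X_{i_r}), \Gm)/l^m = 0 \qquad (r\ge 2).
\]

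The strategy for this vanishing is a norm/projection-formula reduction driven by hypothesis $(\mathbf{Surj})$. Given a generating symbol $\{a_1,\ldots,a_r, b\}_{E/F}$ with $r\ge 2$, view its final pair $\{a_r, b\}_{E/F}$ as an element of $K(E;\Jac(X_{i_r})\otimes E, \Gm)/l^m$, which is isomorphic to $\Ct(X_{i_r}\otimes_F E)^\geo/l^m$ by Lem.~\ref{lem:dec} applied over $E$. Hypothesis $(\mathbf{Surj})$ with $F'=E$ and $F''$ suitably large realizes this pair modulo $l^m$ as a norm $N_{E''/E}(\gamma)$ for $\gamma$ in the corresponding group over $E''$. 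The projection formula \PF of Def.~\ref{def:otimesM} then pulls this norm out through the remaining $r-1\ge 1$ Jacobian slots, yielding an equivalent symbol over $E''$. Iterating the process so that each Jacobian slot is successively absorbed into a transfer, and using the Weil-reciprocity relations of Def.~\ref{def:Som}, one is left with a class manifestly divisible by $l^m$.

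The main obstacle is the last step: $(\mathbf{Surj})$ is stated as a surjectivity of norms on $\Ct(X_i)^\geo$, while what is needed inside the $r$-fold Mackey product is a corresponding surjectivity at the level of a single pair $\{a_r, b\}$ inside $K(F;\Jac(X_{i_1}),\ldots,\Jac(X_{i_r}), \Gm)/l^m$. Translating between the two requires a careful use of the identification $K(E;\Jac(X_{i_r})\otimes E, \Gm) \simeq \Ct(X_{i_r}\otimes_F E)^\geo$ of Lem.~\ref{lem:dec} in a way that is compatible with \PF. Once that compatibility is in place, \PF together with bilinearity can absorb each transfer into the remaining Jacobian slots and produce a genuinely $l^m$-divisible class; the hypothesis $r\ge 2$ is essential precisely because one needs at least one free Jacobian slot to receive the transfer.
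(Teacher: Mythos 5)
Your reduction is the same as the paper's: decompose $\CtX$ by Lem.~\ref{lem:dec}, identify the $r=1$ summands with $\Ct(X_i)^{\geo}$ and the $r=0$ summand with $F^{\times}$, and reduce everything to the vanishing of $K(F;\Jac(X_{i_1}),\ldots,\Jac(X_{i_r}),\Gm)/l^m$ for $r\ge 2$. Your framework for that vanishing is also the right one: regard $M(E)=K(E;\Jac(X_{i_r}),\Gm)\simeq \Ct(X_{i_r}\otimes_F E)^{\geo}$ as a Mackey functor, use the surjection $\bigl(\Jac(X_{i_1})\otimesM M\bigr)(F)\surj K(F;\Jac(X_{i_1}),\Jac(X_{i_r}),\Gm)$ (the paper reduces to $r=2$ without loss of generality), and combine $(\mathbf{Surj})$ with the projection formula \PF. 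But your final step has a genuine gap: pulling a norm out of the $M$-slot only turns $\{a_1,\ldots,a_{r-1},N_{E''/E}(\gamma)\}_{E/F}$ into $\{\Res_{E''/E}(a_1),\ldots,\Res_{E''/E}(a_{r-1}),\gamma\}_{E''/F}$, which is a symbol of exactly the same shape over a larger field. Iterating this procedure, or appealing to the Weil-reciprocity relations, never manufactures a factor of $l^m$; your proposal never says where the divisibility actually comes from, and indeed you choose $E''$ only ``suitably large'' without a criterion.

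The missing idea is that the divisibility is produced in one of the \emph{Jacobian} slots, and that $F''$ must be chosen for that purpose \emph{before} $(\mathbf{Surj})$ is invoked. Since $\Jac(X_{i_1})$ is a semi-abelian variety and $l\neq p$, for a symbol $\{x',y'\}_{F'/F}$ with $x'\in\Jac(X_{i_1})(F')$, $y'\in M(F')$, there is a finite separable extension $F''/F'$ with $\Res_{F''/F'}(x')=l^m x''$ for some $x''\in\Jac(X_{i_1})(F'')$. One then applies $(\mathbf{Surj})$ to this particular $F''$ (using $M(F')\simeq \Ct(X_{i_r}\otimes_F F')^{\geo}$, valid since $F'$ is again perfect) to write $N_{F''/F'}(y'')=y'+l^m z'$ in $M(F')$, and \PF gives
\[
\{x',N_{F''/F'}(y'')\}_{F'/F}=\{\Res_{F''/F'}(x'),y''\}_{F''/F}=l^m\{x'',y''\}_{F''/F},
\]
so $\{x',y'\}_{F'/F}\equiv 0 \bmod l^m$ in a single pass, with no iteration and no use of Weil reciprocity. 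The ``compatibility'' you flag as the main obstacle is not where the difficulty lies: $M$ is a Mackey functor by construction, and \PF is applied inside the Mackey product $\Jac(X_{i_1})\otimesM M$, which surjects onto the Somekawa group by definition; what your argument lacks is the divisibility of points of the semi-abelian variety after a finite separable extension, which is the engine of the whole proof.
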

\begin{proof}
From Lem.~\ref{lem:dec} and $K(F;\Gm) \simeq F^{\times}$, we have 
\begin{align*}
   \CtXg &\simeq \bigoplus_{r = 1}^n\, \bigoplus_{1 \le i_1 < \cdots < i_r \le n} K(F;\Jac(X_{i_1}),\ldots , \Jac(X_{i_r}), \Gm)\quad \mbox{and} \\
   \Ct(X_i)^{\geo} &\simeq K(F;\Jac(X_{i}), \Gm)\quad \mbox{for $i=1,\ldots,n$.}		   
\end{align*}
It is enough to show 
\[
K(F;\Jac(X_1), \Jac(X_2), \ldots ,\Jac(X_t),\Gm)/l^m = 0
\] 
for $t\ge 2$. 
Without loss of generality, we may assume $t=2$.
We consider $M := K(-;\Jac(X_2),\Gm)$ as a Mackey functor over $F$ defined by 
\[
E/F \mapsto M(E) = K(E;\Jac(X_2),\Gm).
\] 
From the very definition of the $K$-group (Def.~\ref{def:Som}), 
we have the surjective map
\[
\left(\Jac(X_1)\otimesM M\right)(F) \surj K(F; \Jac(X_1),\Jac(X_2),\Gm).
\]  
For any element of the form $\set{x', y'}_{F'/F} \in \left(\Jac(X_1)\otimes M\right)(F)$, 
there exists a finite (separable) extension $F''/F'$ such that 
$\Res_{F''/F'}(x') = l^m x''$ for some $x'' \in \Jac(X_1)(F'')$. 
From the assumption (\textbf{Surj}) and $M(F') \simeq \Ct(X_2\otimes_F F')^{\geo}$, 
the norm map 
$N_{F''/F'}:M(F'')/l^m \surj M(F')/l^m$ 
is surjective. 
Hence, there exists 
$y'' \in M(F'')$ such that $N_{F''/F'}(y'') = y' \bmod l^m$. 
Namely, there exists $z' \in M(F')$ such that 
$N_{F''/F'}(y'') = y' + l^m z'$. 
From the ``projection formula'' \PF\ 
in Def.~\ref{def:otimesM} (the definition of the Mackey product \eqref{eq:otimesM}), we have 
\begin{align*}
	\set{x', y'}_{F'/F}
	& = \set{x', N_{F''/F'}(y'') - l^m z'}_{F'/F}\\
	& = \set{x', N_{F''/F'}(y'')}_{F'/F} - \set{x',l^mz'}_{F'/F}\\
	&\stackrel{(\textbf{PF})}{=} \set{\Res_{F''/F'}(x'), y''}_{F''/F} - l^m \set{x', z'}_{F'/F}\\
 	&= \set{l^m x'',y''}_{F''/F}  - l^m \set{x', z'}_{F'/F}\\
	&= l^m \left(\set{x'',y''}_{F''/F} -  \set{x', z'}_{F'/F}\right).
\end{align*}
This implies  
$\left(\Jac(X_1)\otimesM M\right)(F)/l^m = K(F; \Jac(X_1),\Jac(X_2),\Gm)(F)/l^m = 0$.
\end{proof}

\subsection*{Proof of the main theorem}

Now, we suppose 
the base field $F=k$ is a local field (with $\Char(k) = p\ge 0$),  
and devote to prove the following theorem. 
%

\begin{thm}
\label{thm:main}
Assume the following conditions 
\Red or \Split for each $\Xbar_i$: 
\begin{description}
	\item[\Red] the Jacobian variety $\Jac(\Xbar_i)$ of $\Xbar_i$ has potentially good 
reduction,\  
	\item[\Split] the special fiber of the connected component of the N\'eron model 
of $\Jac(\Xbar_i)$ is an extension of an abelian variety 
by a split torus. 
\end{description}

\noindent
Then, the kernel $\Ker(\rhoX)$ is the maximal $l$-divisible subgroup of $\CX$ 
for all prime number $l\neq p$.  
\end{thm}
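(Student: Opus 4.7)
The plan is to show that $\rhoX \otimes \Z/l^m : \CX/l^m \to \piabX/l^m$ is injective for every $m \ge 1$; this is equivalent to the stated identification of $\Ker(\rhoX)$ with the maximal $l$-divisible subgroup of $\CX$.

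First, I would reduce to the tame reciprocity map. Since $l \neq p$, and the kernels of $\CX \twoheadrightarrow \CtX$ and $\piabX \twoheadrightarrow \pitabX$ are built out of the pro-$p$ groups $U^1K_2(F(C)_x)$ and $I^1_{F(C)_x}$, the induced maps $\CX/l^m \to \CtX/l^m$ and $\piabX/l^m \to \pitabX/l^m$ are isomorphisms, so it suffices to prove that $\rhoX : \CtX/l^m \to \pitabX/l^m$ is injective. From diagram \eqref{eq:VtX} and local class field theory, the right vertical map $\rho_k/l^m : k^{\times}/l^m \isomto G_k^{\ab}/l^m$ is an isomorphism for every $l$ prime to the residue characteristic of $k$; a short diagram chase then reduces the problem to the injectivity of the geometric reciprocity map $\rhoX : \CtXg/l^m \to \pitabXg/l^m$.

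For the geometric statement, I would pass to the perfect closure $\wt{k}$: by Lemma~\ref{lem:perf} the source is preserved modulo $l^m$, and the natural map on $\pitab$ is an isomorphism mod $l^m$ since $X \otimes_k \wt{k} \to X$ is a universal homeomorphism. Once the base is perfect, Lemmas~\ref{lem:dec} and \ref{lem:dec2} apply: after verifying the surjectivity hypothesis $(\mathbf{Surj})$ of Lemma~\ref{lem:dec2}, one obtains a decomposition
\[
\CtXg/l^m \;\simeq\; \bigoplus_{i=1}^n \Ct(X_i)^{\geo}/l^m.
\]
I would then establish a matching decomposition of $\pitabXg/l^m$ via a Künneth-type formula for prime-to-$p$ \'etale fundamental groups (using the hypothesis $X_i(k) \neq \emptyset$, which provides splittings of the projections $X \to X_i$), compatible with $\rhoX$ by the functoriality square following Definition~\ref{def:rho}. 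Injectivity of the geometric reciprocity map then reduces componentwise to the case $n = 1$ of a single open curve, which is precisely the main theorem of \cite{Hir17} under hypotheses $(\mathbf{Red})$ or $(\mathbf{Split})$.

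The main obstacle will be the verification of $(\mathbf{Surj})$: that for every tower of finite extensions $k'' \supset k' \supset \wt{k}$ the norm map $\Ct(X_i \otimes_k k'')^{\geo}/l^m \to \Ct(X_i \otimes_k k')^{\geo}/l^m$ is surjective. Through the Somekawa presentation $\Ct(X_i)^{\geo} \simeq K(-; \Jac(X_i), \Gm)$ obtained in the proof of Lemma~\ref{lem:dec}, this becomes a surjectivity statement for norms on Somekawa $K$-groups attached to the semi-abelian variety $\Jac(X_i)$ over a local field. This is exactly the point at which \cite{Yam09} invokes $(\mathbf{Red})$ and $(\mathbf{Split})$ to control the connected component of the N\'eron model, and I expect to transplant that argument to the open-curve setting, with the extra care needed only in tracking the boundary contributions at $\Cinf$, which the formalism of $\CtX$ absorbs cleanly.
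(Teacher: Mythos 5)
Your route to the mod-$l^m$ injectivity is essentially the paper's own proof of its key lemma (Lem.~\ref{lem:key}): kill the wild parts $U^1K_2(\FCx)$ and $I^1_{\FCx}$ to pass to $\CtX$ and $\pitabX$, split off $k^{\times}\to G_k^{\ab}$ using the rational point, pass to the perfect closure via Lem.~\ref{lem:perf} and invariance of $\pi_1$ under universal homeomorphisms, and then invoke Lem.~\ref{lem:dec2} after checking $(\mathbf{Surj})$, which is indeed where \Red/\Split enter (the paper does this concretely, following \cite{Yam09}, Prop.~1.7: injectivity and $\Coker(\rho_{X_i})\simeq(\Z/l^m)^{r_i}$ from Thm.~\ref{thm:curve}, plus Saito's invariance $r(\Xbar_i\otimes_k k')=r(\Xbar_i)$ under \Red or \Split, and a diagram chase; your ``I expect to transplant that argument'' is a promissory note for exactly this step). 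One point where you make the argument harder than necessary: you do not need a K\"unneth-type decomposition of $\pitabXg/l^m$, which over the local base field would itself require justification. The paper only uses the commutative square in which the right vertical arrow is the projection-induced map $\pitab(\X)^{\geo}/l^m\to\bigoplus_i\pitab(\X_i)^{\geo}/l^m$; since the bottom row $\bigoplus_i\rho_{\X_i}$ is injective and the left vertical map is the isomorphism of Lem.~\ref{lem:dec2}, injectivity of $\rho_{\X}$ follows with no claim that the right map is an isomorphism.

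The genuine gap is your opening sentence: injectivity of $\CX/l^m\to\piabX/l^m$ for all $m$ is \emph{not} equivalent to the statement of the theorem. It only yields $\Ker(\rhoX)\subseteq\bigcap_m l^m\CX$, i.e.\ the kernel consists of elements of infinite $l$-height, and for a general abelian group such a subgroup need not be $l$-divisible (nor is the containment of the maximal $l$-divisible subgroup in $\Ker(\rhoX)$ automatic). The paper needs a further argument, following \cite{Hir17}, Thm.~4.6: form $\CX_{\L}=\plim_{m\in\Np}\CX/m$ and $\piabX_{\L}$, note that Lem.~\ref{lem:key} makes $\rho_{X,\L}$ injective, and then run the diagram \eqref{eq:Kerdiv}, using that $\Ker(\psi)$ is $l$-divisible (via \cite{JS03}, Lem.~7.7) and that $\Ker(\phi)$ is $l$-torsion free (because $\pitabX$ is topologically finitely generated and the wild inertia groups $I^1_{\FCx}$ are pro-$p$), to conclude that $\Ker(\rhoX)$ itself is $l$-divisible. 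Your proposal omits this entire second half, so as written it does not reach the stated description of $\Ker(\rhoX)$; you should either supply this limit argument or an equivalent structural input on $\CX$ and $\piabX$.
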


The theorem above is proved by the following results on the reciprocity map $\rho_{X_i}$ 
for the curve $X_i$. 

\begin{thm}[\cite{Hir10} and \cite{Hir17}]
\label{thm:curve}
For each open curve $X_i\subset \Xbar_i$ over $k$, we have 
\begin{enumerate}
	\item $\piab(X_i)/\ol{\Im(\rho_{X_i})} \simeq \Zhat^{r_i}$, 
	where $\ol{\Im(\rho_{X_i})}$ is the topological closure of the image $\Im(\rho_{X_i})$ 
	and $r_i = r(\Xbar_i)$ is the rank of $\Xbar_i$ (\Cf \cite{Sai85}, Chap.~II, Def.~2.5). 
	\item $\Ker(\rho_{X_i})$ is $l$-divisible for any prime $l\neq p$. 
\end{enumerate}	
\end{thm}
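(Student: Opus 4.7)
The conclusion is equivalent to injectivity of the induced map $\rhoX \otimes \Z/l^m \colon \CX/l^m \to \piabX/l^m$ for every prime $l \neq p$ and every integer $m \ge 1$; fix such $l$ and $m$. I first replace the source and target by their tame quotients. The kernels of $\CX \twoheadrightarrow \CtX$ and $\piabX \twoheadrightarrow \pitabX$ are generated by images of pro-$p$ groups (the subgroups $U^1 K_2(\FCx)$ on the cycle side, and the $p$-Sylow $I^1_{\FCx}$ of wild inertia on the Galois side), and hence vanish modulo $l^m$. Thus it suffices to show that $\rhoX/l^m \colon \CtX/l^m \to \pitabX/l^m$ is injective.

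Tensoring the commutative diagram \eqref{eq:VtX} with $\Z/l^m$ and invoking the snake lemma, this reduces in turn to the injectivity of (a) $\rho_k/l^m \colon k^\times/l^m \to \Gkab/l^m$, which is standard local class field theory, and (b) the geometric reciprocity map $\rhoX/l^m \colon \CtXg/l^m \to \pitabXg/l^m$. To prepare for Lemma~\ref{lem:dec2}, I pass to the perfect closure $\wt{k}$: by Lemma~\ref{lem:perf} the source is unchanged modulo $l^m$, and the target is unchanged because $\wt{k}/k$ is purely inseparable, so $G_{\wt{k}}=G_k$ and the \'etale fundamental groups agree. I may therefore assume the base field is perfect.

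Under hypothesis $(\textbf{Surj})$, Lemma~\ref{lem:dec2} yields an isomorphism $\bigoplus_{i=1}^{n}\Ct(X_i)^{\geo}/l^m \cong \CtXg/l^m$ realized by the closed immersions $\iota_i\colon X_i\hookrightarrow X$ determined by the base points $x_j\in X_j(k)$ for $j\neq i$. The corresponding Galois push-forwards assemble into
\[
\sum_i (\iota_i)_{\ast} \colon \bigoplus_{i=1}^{n} \pitab(X_i)^{\geo}/l^m \longrightarrow \pitabXg/l^m,
\]
which is split injective: the projections $\pr_i\colon X \to X_i$ provide retractions, and for $i\neq j$ the composition $(\pr_j)_{\ast}\circ (\iota_i)_{\ast}$ factors through $\piab(\Spec k)=\Gkab$, so it vanishes on the geometric part. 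Functoriality of $\rho$ (cf.~\eqref{eq:rho-f}) fits these data together into the commutative square
\[
\xymatrix@R=5mm{
\displaystyle \bigoplus_i \Ct(X_i)^{\geo}/l^m \ar[r]^-{\oplus \rho_{X_i}/l^m} \ar[d]_-{\cong} & \displaystyle \bigoplus_i \pitab(X_i)^{\geo}/l^m \ar[d] \\
\CtXg/l^m \ar[r]^-{\rhoX/l^m} & \pitabXg/l^m,
}
\]
whose top horizontal arrow is injective by Theorem~\ref{thm:curve}(ii). Since the left vertical is an isomorphism and the right vertical is injective, $\rhoX/l^m$ is injective on $\CtXg/l^m$, completing the argument modulo the verification of $(\textbf{Surj})$.

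The main obstacle is precisely this verification of $(\textbf{Surj})$ under hypothesis $(\textbf{Red})$ or $(\textbf{Split})$. By Lemma~\ref{lem:dec} applied to each single curve $X_i$ over an intermediate field, $\Ct(X_i\otimes_k F')^{\geo}/l^m$ identifies with the Somekawa $K$-group $K(F';\Jac(X_i),\Gm)/l^m$, and the required norm surjectivity
\[
K(F'';\Jac(X_i),\Gm)/l^m \twoheadrightarrow K(F';\Jac(X_i),\Gm)/l^m
\]
reduces to a norm-surjectivity statement for Somekawa $K$-groups of a semi-abelian variety whose N\'eron model has potentially good or split-toric reduction, which is the input supplied by \cite{Yam09}. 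A secondary technical point is to ensure that the splitting on the Galois side of the displayed square is compatible with the Somekawa decomposition of Lemma~\ref{lem:dec2}; this is handled by an explicit diagram chase with the base-point sections.
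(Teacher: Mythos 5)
Your proposal does not prove the statement at hand. Theorem~\ref{thm:curve} concerns a \emph{single} open curve $X_i\subset\Xbar_i$; it is the one-dimensional input imported from \cite{Hir10} and \cite{Hir17}, and the present paper gives no proof of it (the proofs in those references rest on Saito's class field theory for curves over a local field and a study of the id\`ele class group of an open curve, not on anything in this paper). What you have written is instead an argument about the product $X=X_1\times\cdots\times X_n$ --- essentially a reconstruction of Lemma~\ref{lem:key} and the proof of Theorem~\ref{thm:main} --- and it explicitly invokes ``Theorem~\ref{thm:curve}(ii)'' to get injectivity of $\oplus_i\rho_{X_i}/l^m$ in your final diagram. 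As a proof of Theorem~\ref{thm:curve} this is circular: the statement you are asked to establish is the very input your argument consumes.

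Two further concrete problems. First, part (i), the isomorphism $\piab(X_i)/\ol{\Im(\rho_{X_i})}\simeq\Zhat^{r_i}$, is not addressed anywhere in your write-up, yet it is an essential half of the theorem (it is what makes Claim~\ref{claim:Ct} work later). Second, your opening sentence asserts that $l$-divisibility of the kernel is \emph{equivalent} to injectivity of $\rho/l^m$ for all $m$; this is false. Injectivity mod $l^m$ only yields $\Ker(\rho)\subset l^m C$ for every $m$, and passing from $\Ker(\rho)\subset\bigcap_m l^m C$ to genuine $l$-divisibility of $\Ker(\rho)$ is exactly the nontrivial step the paper performs in diagram \eqref{eq:Kerdiv}, using the $l$-divisibility of $\Ker(\psi)$ (via Lemma~7.7 of \cite{JS03}) and the $l$-torsion-freeness of $\Ker(\phi)$. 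So even reinterpreted as a proof of the theorem you are actually arguing for, the first sentence hides a real gap.
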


To prove Thm.~\ref{thm:main}, we prepare some lemmas. 
\begin{lem}
\label{lem:div}
	Let $K$ be a complete discrete valuation field of characteristic $p\ge 0$. 
	Then, $U^1K_2(K)$ is $l$-divisible for all prime $l\neq p$. 	
\end{lem}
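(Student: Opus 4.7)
The plan is to reduce $l$-divisibility of $U^1 K_2(K)$ to $l$-divisibility of the principal unit group $U_K^1 = 1 + \mK$, and then to establish the latter by Hensel's lemma. The only substantive input is the hypothesis $l \neq p = \Char(K)$, which ensures that $l$ is a unit in $\OK$.

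First I would show that $U_K^1$ is $l$-divisible. Given any $u = 1 + a \in U_K^1$ with $a \in \mK$, consider the polynomial $f(T) = (1 + T)^l - u \in \OK[T]$. Then $f(0) = -a \in \mK$ and $f'(0) = l \in \OKt$, so Hensel's lemma produces a unique $b \in \mK$ with $(1+b)^l = u$; setting $v = 1 + b \in U_K^1$ we obtain $u = v^l$. Thus every principal unit admits an $l$-th root in $U_K^1$.

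Next, I would pass from $U_K^1$ to $U^1 K_2(K)$ using the bilinearity of the Milnor symbol. By definition, $U^1 K_2(K)$ is generated by symbols $\set{u, x}$ with $u \in U_K^1$ and $x \in K^{\times}$; for each such generator, writing $u = v^l$ as above yields $\set{u, x} = \set{v^l, x} = l\set{v, x}$, and $\set{v, x}$ still belongs to $U^1 K_2(K)$. Since any element of $U^1 K_2(K)$ is a finite $\Z$-linear combination of such symbols, it lies in $l \cdot U^1 K_2(K)$. The main (and essentially only) point requiring attention is the invertibility of $l$ in $\OK$, which the hypothesis on characteristics provides; the rest is a formal consequence of bilinearity, and no finer structural analysis of $K_2(K)$ is needed.
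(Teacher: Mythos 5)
Your argument is essentially the paper's: both proofs reduce the statement to $l$-divisibility of the principal units $U_K^1$ and then use that $U^1K_2(K)$ is generated by symbols $\{u,x\}$ with $u\in U_K^1$, $x\in K^{\times}$, so that $\{u,x\}=\{v^l,x\}=l\{v,x\}$; the only difference is that you prove the divisibility of $U_K^1$ by an explicit Hensel's lemma argument where the paper cites \cite{FV}, Chap.~I, Cor.~5.5. One caveat: the invertibility of $f'(0)=l$ in $\OK$ is governed by the \emph{residue} characteristic of $K$, not by $\Char(K)$, so your assertion that $l\neq p=\Char(K)$ makes $l$ a unit breaks down for mixed-characteristic fields (already for $K=\Q_l$ the group $U_K^1=1+l\Z_l$ is not $l$-divisible), and in that generality the lemma's literal statement is itself delicate. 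This does not affect the intended application, since the fields $k(C)_x\simeq k(x)((t))$ to which the lemma is applied are equicharacteristic, so residue characteristic and $\Char(K)$ coincide; note also that the paper's citation of \cite{FV} carries the same residue-characteristic hypothesis, so your proof is on the same footing as the paper's.
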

\begin{proof}
	Recall that $U^1K_2(K)$ is generated by $\set{U^1_K,K^{\times}} \subset  K_2(K)$. 
	As $K$ is complete, the unit group $U_K^1$ is $l$-divisible (\cite{FV}, Chap.\ I, 
	Cor.\ 5.5). Therefore, 
	$U^1K_2(K)$ is also $l$-divisible.
\end{proof}

\begin{lem}
\label{lem:key}
	For any $m\in \Z_{>0}$ and a prime $l\neq p$, 
	$\CX/l^m \to \piabX/l^m$ induced from $\rho_X$ is injective. 
\end{lem}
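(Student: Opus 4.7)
The plan is to reduce injectivity of $\CX/l^m\to \piabX/l^m$ to the one-variable result of Thm.~\ref{thm:curve} via a sequence of four reductions. First I would pass to the tame quotients: by Lem.~\ref{lem:div} each $U^1K_2(F(C)_x)$ is $l$-divisible, hence so is its image in $\CX$, giving $\CX/l^m\isomto\CtX/l^m$. On the Galois side, the subgroup of $\piabX$ killed in the tame quotient is topologically generated by the pro-$p$ subgroups $I^1_{F(C)_x}$ of Def.~\ref{def:pit}, so it vanishes mod $l^m$ and $\piabX/l^m\isomto\pitabX/l^m$. Hence it suffices to prove injectivity of $\CtX/l^m\to\pitabX/l^m$.

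Next I would use the hypothesis $X_i(k)\neq\emptyset$: a choice of rational points gives a section $s\colon \Spec(k)\to X$ of the structure map $\gamma$, and by the functoriality of $\rho$ (the diagram following Def.~\ref{def:rho}) this section splits both rows of the tame diagram \eqref{eq:VtX} as $\CtX\simeq\CtXg\oplus k^\times$ and $\pitabX\simeq\pitabXg\oplus G_k^{\ab}$, compatibly with $\rho_X$. Local class field theory handles the $k^\times$-component, so the question reduces to injectivity of $\CtXg/l^m\to\pitabXg/l^m$.

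At this point I would base change to the perfect closure $\wt k = k^{1/p^\infty}$. Lem.~\ref{lem:perf} gives $\CtX/l^m\simeq \Ct(X\otimes_k\wt k/\wt k)/l^m$, and since $\wt k/k$ is purely inseparable one has $k^\times/l^m\isomto \wt k^\times/l^m$ and $G_k^{\ab}=G_{\wt k}^{\ab}$, so the isomorphism descends to the geometric parts. Over the now-perfect base Lem.~\ref{lem:dec2} (after verifying its hypothesis $(\mathbf{Surj})$) produces
\[
\CtXg/l^m \simeq \bigoplus_{i=1}^n \Ct(X_i)^{\geo}/l^m.
\]
Using the projections $p_i\colon X\to X_i$ together with the sections $\iota_i\colon X_i\to X$ built from the rational points of the remaining factors, the functoriality of $\rho$ identifies the restriction of $\rho_X$ to the $i$-th summand with $\rho_{X_i}$ (the cross-terms $p_{j\ast}\iota_{i\ast}$ for $j\neq i$ factor through $k^\times$ and die on the geometric part). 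Thus if $\alpha\in\CtXg/l^m$ lies in the kernel of $\rho_X$, then $\rho_{X_i}(p_{i\ast}\alpha)=0$ for every $i$. By Thm.~\ref{thm:curve}(ii) the kernel of $\rho_{X_i}$ is $l$-divisible, so $\Ct(X_i)/l^m \isomto \Im(\rho_{X_i})/l^m$; combined with Thm.~\ref{thm:curve}(i) and the torsion-freeness of $\Zhat^{r_i}$, tensoring the exact sequence $0\to \ol{\Im(\rho_{X_i})}\to\pitab(X_i)\to \Zhat^{r_i}\to 0$ with $\Z/l^m$ remains left exact, forcing $\Ct(X_i)^{\geo}/l^m\hookrightarrow \pitab(X_i)^{\geo}/l^m$. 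Therefore each $p_{i\ast}\alpha$ vanishes, and $\alpha=0$.

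The step I expect to be the main obstacle is the verification of $(\mathbf{Surj})$ for each $X_i\otimes_k\wt k$: this amounts to surjectivity mod $l^m$ of norm maps on $\Ct(X_i)^{\geo}$ between finite extensions of $\wt k$, and it is here that the reduction hypotheses $(\mathbf{Red})$ and $(\mathbf{Split})$ on $\Jac(\Xbar_i)$ must ultimately enter the argument. A secondary subtlety is making explicit the identification of the Somekawa-theoretic decomposition of Lem.~\ref{lem:dec2} with the $\iota_i/p_i$-splitting used to reduce to the curve case, but this should be a formal consequence of the functoriality of $\rho$ once both sides are identified with the corresponding $K$-theoretic objects.
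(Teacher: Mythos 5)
Your route is essentially the paper's: pass to the tame geometric part using Lem.~\ref{lem:div} and the pro-$p$ property of the groups $I^1_{k(C)_x}$, split off $k^{\times}$ by a rational point, base change to the perfect closure $\wt{k}$ via Lem.~\ref{lem:perf}, decompose by Lem.~\ref{lem:dec2}, and finish with Thm.~\ref{thm:curve} and functoriality of $\rho$. The genuine gap is that you never verify the hypothesis $(\mathbf{Surj})$ of Lem.~\ref{lem:dec2}; you only record it as ``the main obstacle''. But this is exactly where the proof has content and where the assumptions \Red and \Split are used; without it the decomposition $\Ct(\X)^{\geo}/l^m\simeq\bigoplus_{i}\Ct(\X_i)^{\geo}/l^m$, hence your whole reduction to the curve case, is unavailable. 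The paper settles it by the argument of \cite{Yam09}, Prop.~1.7: under \Red or \Split one has $r(\Xbar_i\otimes_k k')=r(\Xbar_i)$ for every finite extension $k'/k$ (\cite{Sai85}, Chap.~II, Thm.~6.2), so for finite extensions $\wt{k}''\supset\wt{k}'\supset\wt{k}$ the curve case yields two exact rows
\[
0\to \Ct(\X_i'')^{\geo}/l^m\to \pitab(\X_i'')^{\geo}/l^m\to (\Z/l^m)^{r_i}\to 0,
\qquad
0\to \Ct(\X_i')^{\geo}/l^m\to \pitab(\X_i')^{\geo}/l^m\to (\Z/l^m)^{r_i}\to 0,
\]
with the \emph{same} rank $r_i$ on the right; since $\pitab(\X_i'')^{\geo}/l^m\to\pitab(\X_i')^{\geo}/l^m$ is surjective and the right-hand vertical map is the identity, a diagram chase gives surjectivity of the norm $\Ct(\X_i'')^{\geo}/l^m\to\Ct(\X_i')^{\geo}/l^m$, i.e.\ $(\mathbf{Surj})$. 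Correctly guessing where \Red and \Split must enter is not the same as supplying this argument, and nothing else in your proposal substitutes for it.

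Two smaller points. Your derivation of the curve-case injectivity mod $l^m$ from Thm.~\ref{thm:curve} alone is too quick: $l$-divisibility of $\Ker(\rho_{X_i})$ gives $C(X_i)/l^m\isomto \Im(\rho_{X_i})/l^m$, and torsion-freeness of $\Zhat^{r_i}$ gives that $\ol{\Im(\rho_{X_i})}/l^m$ injects into $\piab(X_i)/l^m$, but you still need injectivity of $\Im(\rho_{X_i})/l^m\to \ol{\Im(\rho_{X_i})}/l^m$, which is not formal; the paper takes the mod-$l^m$ statement for curves from \cite{Hir10}, \cite{Hir17}, and moreover needs it over the finite extensions $\wt{k}'$, $\wt{k}''$ appearing in $(\mathbf{Surj})$, which is where the transfer of the curve results to $\X_i$ via Lem.~\ref{lem:perf} together with $\pi_1(\X)\isomto\pi_1(X)$ (\cite{SGA1}, Exp.~IV) is required --- your remark that $G_k^{\ab}=G_{\wt{k}}^{\ab}$ does not by itself identify $\pitab(\X)^{\geo}/l^m$ with $\pitabXg/l^m$. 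The compatibility of the decomposition of Lem.~\ref{lem:dec2} with the projections $\X\to\X_i$, which you flag as a secondary subtlety, is handled in the paper exactly as you suggest, by functoriality of $\rho$.
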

\begin{proof}
Consider the following commutative diagram with exact rows (\Cf \eqref{eq:VX}): 
\[
\xymatrix{
0\ar[r] & \CXg\ar[d]^{\rho_{X}} \ar[r]& \CX\ar[d]^{\rho_X} \ar[r]^{N_X} & k^{\times} \ar[d]^{\rho_k} \ar[r] & 0\\ 
0 \ar[r] & \piabXg \ar[r] & \piabX \ar[r] & G_k^{\ab} \ar[r] & 0.
}
\] 
The existence of a $k$-rational point of $X$ implies that  the horizontal sequences 
in the above diagram 
are split. 
By local class field theory, the induced homomorphism 
$k^{\times}/l^m \to G_k^{\ab}/l^m$ from $\rho_k$ 
is injective (in fact, bijective). 
Thus, to show the assertion  
it is enough to prove that 
$\rho_{X}:\CXg/l^m \to \piabXg/l^m$ 
(we use the same notation as $\rho_X:\CX \to \piabX$ for simplicity) 
is injective.

From the $2$-dimensional local class field theory and 
the commutative diagrams \eqref{eq:VX} and \eqref{eq:VtX}, 
we have 
\[
\vcenter{
\entrymodifiers={!! <0pt, .8ex>+}
\xymatrix@R=5mm{
\displaystyle{\bigoplus_{C \in \CuX}\bigoplus_{x\in C_{\infty}} U^1K_2(k(C)_x)} 
\ar[d]^-{\rho_{\kCx}} \ar[r] & \CXg\ar[d]^{\rho_X} \ar[r] & \CtXg \ar[d]^{\rho_X}\ar[r] & 0, \\
\displaystyle{\bigoplus_{C \in \CuX}\bigoplus_{x\in C_{\infty}} I_{k(C)_x}^1} \ar[r] & \piabXg \ar[r]&  \pitabXg \ar[r] & 0. 
}
}
\]
From Lem.~\ref{lem:div}, 
we have $U^1K_2(\kCx)/l^m = 0$ for each $C \in \CX$ and $x\in \Cinf$ 
so that $\CXg/l^m \isomto \CtXg/l^m$. 
The wild inertia subgroup
$I_{\kCx}^1$ is pro-$p$ so that 
we have 
$\piabX/l^m\isomto \pitabX/l^m$.
Hence, the assertion is reduced to showing that 
$\rho_{X}:\CtXg/l^m \to \pitabXg/l^m$ is injective.

Let $\wt{k} = k^{-p^{\infty}}$ be the perfect closure of $k$. 
We denote by $\wt{X} = X\otimes_k \wt{k}$ 
the base change to $\wt{k}$ of $X$. 
We also denote by 
\[
\xymatrix{
  \Ct(\wt{X}/\wt{k})^{\geo} = \Ker\Big( \Ct(\wt{X}/\wt{k}) \ar[r]^-{N_{\wt{X}}} & \wt{k}^{\times}\Big) 
 }
\]
as in \eqref{eq:VX}. 
From Lem.~\ref{lem:perf} and 
$k^{\times}/l^m \simeq \wt{k}^{\times}/l^m$ 
we have an isomorphism  
$\Ct(X/k)^{\geo}/l^m \isomto \Ct(\wt{X}/\wt{k})^{\geo}/l^m$. 
It is well-known that we have $\pi_1(\X) \isomto \pi_1(X)$ (\Cf \cite{SGA1}, Exp.~IV, 
Proof of Thm.\ 6.1).  
we obtain $\rho_{\X}$ below: 
\[
\xymatrix{
\Ct(X/k)^{\geo}/l^m \ar[r]^{\rho_X} & \pitabXg/l^m \\
\Ct(\X/\wt{k})^{\geo}/l^m\ar[u]^{\simeq} \ar@{-->}[r]^{\rho_{\X}} & \pitab(\X/\wt{k})^{\geo}/l^m\ar[u]^{\simeq},
}
\]
where $\pitab(\X/\wt{k})^{\geo} = \Ker(\pitab(\X) \to G_{\wt{k}}^{\ab})$. 
Thus the assertion is reduced to showing that $\rho_{\sX}$ is injective. 
In the following, we write $\Ct(\X)^{\geo} = \Ct(\X/\wt{k})^{\geo}$ 
and $\pitab(\X)^{\geo } := \pitab(\X/\wt{k})^{\geo}$ for simplicity. 
We prove that 
\[
\rho_{\X}:\Ct(\X)^{\geo}/l^m \to \pitab(\X)^{\geo }/l^m 
\] 
is injective. 

Recall that $r_i = r(\Xbar_i)$ is the rank of $X_i$ (\Cf Thm.~\ref{thm:curve}). 
From the assumption \Red\, or \Split, we have 
$r(\Xbar_i\otimes_k k') = r(\Xbar_i)$ for any finite extension $k'/k$ (\cite{Sai85}, Chap.~II, Thm.~6.2). 
Since $\rho_{X_i}$ is injective and $\Coker(\rho_{X_i}) \simeq \Z^{r_i}/l^m$ (Thm.~\ref{thm:curve}), 
the same results hold for 
\[
\rho_{\X_i}: \Ct(\X_i)^{\geo}/l^m \to \pitab(\X_i)^{\geo}/l^m, 
\]
where $\X_i = X_i \otimes_k \wt{k}$. 
%

\begin{claim}
\label{claim:Ct}
The condition {$(\mathbf{Surj})\,$} in Lem.~\ref{lem:dec2} holds.\ 
Namely, for any finite extensions $\wt{k}''\supset \wt{k}'\supset \wt{k}$, 
put $\X_i'  = \X_i \otimes_{\wt{k}} \wt{k}'$ and $\X_i'' = \X_i \otimes_{\wt{k}} \wt{k}''$. 
Then,  
the norm map 
\[
\Ct(\X_i'')^{\geo}/l^m \to \Ct(\X_i')^{\geo}/l^m
\] 
is surjective. 
\end{claim}
\begin{proof}
(We follow the argument of \cite{Yam09}, Prop.~1.7.)
Consider the following commutative diagram with exact rows:
\[
\xymatrix{
0\ar[r] & \Ct(\X_i'')^{\geo}/l^m \ar[d]\ar[r]^{\rho_{\X_i''}} &  \pitab(\X_i'')^{\geo}/l^m \ar@{->>}[d]\ar[r] & (\Z/l^m)^{r_i}\ar@{=}[d] \ar[r]& 0\\
0 \ar[r] & \Ct(\X_i')^{\geo}/l^m \ar[r]^{\rho_{\X_i'}} &\pitab(\X_i')^{\geo}/l^m \ar[r] & (\Z/l^m)^{r_i} \ar[r] & 0.
}
\]
Since the middle vertical map is surjective, 
the assertion follows from the above diagram. 
\end{proof}

From Lem.~\ref{lem:dec2}, 
we have the following commutative diagram: 
\[
\vcenter{
\entrymodifiers={!! <0pt, .8ex>+}
\xymatrix@C=10mm{
\Ct(\X)^{\geo}/l^m \ar[d]^{\simeq} \ar[r]^{\rho_{\X}} & \pitab(\X)^{\geo}/l^m\ar[d] \\
\displaystyle{\bigoplus_{i=1}^n \Ct(\X_i)^{\geo}/l^m } \ar@{^{(}->}[r]^-{\oplus \rho_{\X_i}}& \displaystyle{\bigoplus_{i=1}^n \pitab(\X_i)^{\geo}/l^m},
}
}
\]
where the right vertical map is given by the projection $\X \to \X_i$.
Since the bottom horizontal map is injective in the above diagram, 
so is $\rho_{\X}$ and the assertion follows from this. 
\end{proof}

\begin{proof}[Proof of Thm.~\ref{thm:main}] 
Using Lem.\ \ref{lem:key}, 
the same proof of Thm.\ 4.6 in \cite{Hir17} works well. 
We give a sketch of the proof. 
Let 
$\Np$ be the set of $m\in\Z_{\ge 1}$ which is prime to $p$.
For an abelian  group $G$, we denote by 
\[
G_{\L} := \plim_{m \in \Np}G/m.
\]
From Lem.\ \ref{lem:key}, 
the map 
$\rho_{X,\L}: \CX_{\L} \inj \piabX_{\L}$ induced from $\rhoX$  
is injective. 
Consider the following diagram:
	\begin{equation}
	\label{eq:Kerdiv}
	\vcenter{
	\xymatrix{
	            &                    & 0\ar[d]                 & 0\ar[d]\\  
		0\ar[r] & \Ker(\rhoX)\ar@{=}[d] \ar[r]& \Ker(\psi)\ar[d] \ar[r] & \Ker(\phi)\ar[d] \\
		0\ar[r] & \Ker(\rhoX)\ar[r]&  \CX \ar[d]^{\psi}\ar[r]^-{\rhoX} & \piabX\ar[d]^{\phi} \\
		& 0\ar[r] & \CX_{\L} \ar[r]^-{\rho_{X,\L} }& \piabX_{\L}, 
	}}
	\end{equation}
where 
$\psi: \CX \to \CX_{\L}$ and $\phi:\piabX \to \piabX_{\L}$ 
are natural maps. 
 
For any prime $l\neq p$, 
$\Ker(\psi)$ is $l$-divisible 
by using Lem.\ 7.7 in \cite{JS03} (\Cf \cite{Hir17}, Claim in the proof of Thm.~4.6). 
On the other hand, 
we have a short exact sequence
\[
\bigoplus_{C\in\CuX} \bigoplus_{x\in\Cinf}I^1_{k(C)_x} \to \Ker(\phi) \to \Ker(\phi^t) \to 0,
\]
where $\phi^t: \pitabX\to \pitabX_{\L}$ is the natural map. 
The wild inertia subgroup $I_{\kCx}^1$ is pro-$p$. 
From the finitely generatedness of $\pitabX$, 
$\Ker(\phi^t)$ is $l$-torsion free 
and so is $\Ker(\phi)$. 
From the top exact sequence in \eqref{eq:Kerdiv}, 
$\Ker(\rhoX)$ is $l$-divisible.
\end{proof}

\def\cprime{$'$}
\providecommand{\bysame}{\leavevmode\hbox to3em{\hrulefill}\thinspace}
\providecommand{\MR}{\relax\ifhmode\unskip\space\fi MR }
\providecommand{\MRhref}[2]{%
  \href{http://www.ams.org/mathscinet-getitem?mr=#1}{#2}
}
\providecommand{\href}[2]{#2}

\bigskip\noindent
Toshiro Hiranouchi \\
Department of Basic Sciences, 
Graduate School of Engineering, 
Kyushu Institute of Technology\\
1-1 Sensui-cho, Tobata-ku, Kitakyushu-shi, 
Fukuoka, 804-8550, JAPAN\\
\texttt{hira@mns.kyutech.ac.jp}

\end{document}